\documentclass[12pt,a4paper]{article}
\usepackage[margin=25mm,headheight=15pt]{geometry}
\usepackage[T1]{fontenc}
\usepackage{lmodern,amsmath,amssymb,amsthm,mathtools,microtype}
\usepackage[hidelinks]{hyperref}
\usepackage{fancyhdr,needspace}
\theoremstyle{plain}
\newtheorem{theorem}{Theorem}[section]
\newtheorem{lemma}[theorem]{Lemma}
\newtheorem{proposition}[theorem]{Proposition}
\newtheorem{remark}[theorem]{Remark}
\newtheorem{example}[theorem]{Example}
\newtheorem*{remark*}{Remark}
\newtheorem*{lemma*}{Lemma}
\numberwithin{equation}{section}
\DeclareMathOperator{\sgn}{sgn}

\newcommand{\proofstep}[1]{\leavevmode\par\medskip\noindent\textbf{#1}\enspace}
\allowdisplaybreaks[2]
\title{Blow-up, critical norm inflation, and global existence\\for the Geng--Xue system}
\author{Song Liu\thanks{Email: lius37@mail2.sysu.edu.cn}\qquad Zhaoyang Yin\thanks{Email: mcsyzy@mail.sysu.edu.cn}\\[5pt]
\small School of Science, Shenzhen Campus of Sun Yat-sen University,\\
\small Shenzhen 518107, China}
\date{}
\begin{document}
\maketitle
\begin{abstract}
We study finite-time blow-up, critical norm inflation, and global
well-posedness for the Geng--Xue system, an integrable two-component
extension of the Novikov equation.
For initial velocities $(u_0,v_0)\in(H^s)^2$, $s>3/2$,
with second momentum $n_0=v_0-v_{0,xx}\in L^1_{\rm uloc}$,
we derive explicit blow-up conditions and upper bounds on the lifespan.
The blow-up proof uses a cancellation in the equation for $nu_x$
to close the convolution estimates without an a priori bound
on $\|u_x\|_{L^\infty}$.
Combined with a continuation criterion involving only $u$,
these estimates yield critical Besov norm inflation
around the initial data in $(H^{3/2})^2$.
For initial momenta in $(H^\sigma)^2$, $\sigma>-1/2$,
each with a fixed sign, we prove global well-posedness.
The blow-up and norm-inflation results also extend to the
three-component Degasperis--Procesi system.
\end{abstract}
\noindent\textbf{2020 Mathematics Subject Classification:} 35B44, 35L05, 35G05.\\
\textbf{Keywords:} Geng--Xue system; finite-time blow-up; critical norm inflation; global existence; sign-definite momenta.

\clearpage
\section{Introduction}

We consider the Cauchy problem for the Geng--Xue (GX) system
\cite{r7}:
\begin{equation}\label{gx}
\begin{cases}
m_t+uvm_x+3vu_xm=0,\\[2pt]
n_t+uvn_x+3uv_xn=0,\\[2pt]
m=u-u_{xx},\qquad n=v-v_{xx}.
\end{cases}
\end{equation}
This system was introduced by Geng and Xue \cite{r7},
who established its Hamiltonian structure and showed that
it admits peakon solutions.
It is an integrable two-component extension of the Novikov
equation and is closely related to the Camassa--Holm and
Degasperis--Procesi shallow-water models
\cite{GXPeakonDynamics}.
Lundmark and Szmigielski \cite{r16} solved the associated
spectral and inverse spectral problems.

For the Cauchy problem, Himonas and Mantzavinos \cite{r10}
proved local well-posedness in $(H^s)^2$, $s>3/2$,
and failure of uniform continuity of the data-to-solution map.
Chen, Qiao, and Zhou \cite{r2} studied persistence properties
and obtained a blow-up criterion.
Liu and Yin \cite{r15} derived a blow-up criterion in Besov
spaces of low regularity and proved finite-time blow-up
under pointwise assumptions on the initial data,
without using conservation laws.

When $v\equiv1$, \eqref{gx} reduces to the
Degasperis--Procesi (DP) equation \cite{r5}:
\[
m_t+um_x+3u_xm=0,\qquad m=u-u_{xx}.
\]
The DP equation is related to asymptotic models for
unidirectional shallow-water waves \cite{r3};
the equation above is its dispersionless form.
It has a bi-Hamiltonian structure, infinitely many
conservation laws, and peakon solutions \cite{r4}.
Parametric solutions of the DP equation and
algebro-geometric solutions of its hierarchy were
constructed in \cite{r40,r38}, respectively.
For results on the Cauchy and initial-boundary value
problems, see \cite{r32,r34,r41,r42}.

A related shallow-water model is the Camassa--Holm (CH)
equation \cite{r27,r31}, whose dispersionless form is
\[
m_t+um_x+2u_xm=0,\qquad m=u-u_{xx}.
\]
Camassa and Holm \cite{r27} derived the corresponding
model by an asymptotic expansion of the Euler Hamiltonian.
Like the DP equation, it has a bi-Hamiltonian structure
and admits peakon solutions.
Both equations exhibit wave breaking, in which the velocity
remains bounded while its spatial derivative becomes
unbounded \cite{r3}.
For results on the Cauchy problem for the CH equation,
see \cite{r29,r28}.

When $v\equiv u$, \eqref{gx} reduces to the Novikov equation
\cite{r18}:
\[
m_t+u^2m_x+3uu_xm=0,\qquad m=u-u_{xx}.
\]
This integrable equation has a cubic nonlinearity and admits
peakon solutions \cite{r37}.
Unlike the CH and DP equations, its transport velocity is
$u^2$, so the transport speed depends quadratically on
the wave amplitude.
For results on well-posedness, blow-up, and ill-posedness,
see \cite{r35,r36,r39}.

We also extend the blow-up and norm-inflation results to the
three-component Degasperis--Procesi (3DP) system:
\begin{equation}\label{3dp}
\begin{cases}
\rho_t+(\rho uv)_x=0,\\[2pt]
m_t+uvm_x+3vu_xm+\rho^2u=0,\\[2pt]
n_t+uvn_x+3uv_xn-\rho^2v=0,\\[2pt]
m=u-u_{xx},\qquad n=v-v_{xx}.
\end{cases}
\end{equation}
When $\rho\equiv0$, this system reduces to the GX system
\eqref{gx}.
Here $\rho$ satisfies a continuity equation with transport
velocity $uv$ and introduces source terms into the momentum
equations.
For related three-component integrable systems, their local
well-posedness, and their reductions, see \cite{r12,r13,r14}.
Our argument incorporates the density terms without assuming
$\rho_0^2-\bar\rho^2\in L^1(\mathbb R)$,
where $\bar\rho$ is the constant density background.

\subsection[Notation]{Notation}\label{sec:notation}
\begingroup
Throughout this paper, all functions are real-valued.
Function spaces are nonhomogeneous and, unless stated otherwise,
are defined on $\mathbb R$. Product norms are sums of the component
norms. We denote the Schwartz class by $\mathcal S(\mathbb R)$
and the circle by $\mathbb T$.
Let $D^a=(1-\partial_x^2)^{a/2}$ and
$P(x)=\tfrac12e^{-|x|}$, the Green function of $1-\partial_x^2$.
We set $\eta=\rho-\bar\rho$, where $\bar\rho$ is the constant
density background.

We use the uniformly local Lebesgue space
$L^1_{\rm uloc}(\mathbb R)$; see, for example,
\cite[p.~270]{BerselliBisconti2015}.
Specifically,
\[
L^1_{\rm uloc}
=\left\{f\in L^1_{\rm loc}:\|f\|_{L^1_{\rm uloc}}
:=\sup_{a\in\mathbb R}\int_a^{a+1}|f(x)|\,dx<\infty\right\}.
\]
Constants may change from line to line, and their dependence
is indicated where needed. The notation $A\lesssim B$ means
$A\le CB$ for such a constant $C>0$.

For the blow-up criterion, we use the following quantities
associated with a solution of \eqref{3dp}.
Set $g=nu_x+\rho^2-\bar\rho^2$ and
\begin{equation}\label{blowup-quantities}
A(t)=\|P*(|u(t)|+|u_x(t)|)\|_{L^\infty},\quad
B(t)=\|P*|n(t)|\|_{L^\infty},\quad
N(t)=\|P*|g(t)|\|_{L^\infty}.
\end{equation}
Their initial values are denoted by $A_0$, $B_0$, and $N_0$,
{respectively. Let} $K=(\bar\rho^2+A_0^2+B_0^2+N_0)^{1/2}$.
{For $K>0$, $\nu>0$, and $\theta\in(0,1)$, set}
\begin{equation}\label{threshold-coeff}
S_\theta=\left(K+\tfrac53(1-\theta)\nu\right)^2,\quad
T_\theta=\frac{S_\theta-K^2}{10S_\theta K^2},\quad
d_\theta=\sqrt{S_\theta^2/4+8\theta\nu S_\theta^{3/2}}.
\end{equation}
\endgroup

\subsection{Main results}
\begin{theorem}[Finite-time blow-up]\label{thm:blowup}
Let $s>3/2$ and $\bar\rho\in[0,1]$. Assume that
$(\rho_0-\bar\rho,u_0,v_0)\in H^{s-1}\times(H^s)^2$
and $n_0\in L^1_{\rm uloc}$, and let $(\rho,u,v)$ be the
corresponding maximal strong solution of \eqref{3dp},
with lifespan $T^*$.
Fix $x_0\in\mathbb R$ such that $v_0(x_0)\ne0$, and set
$\nu=|v_0(x_0)|$ and
$y_0=-\sgn(v_0(x_0))u_{0,x}(x_0)$.
With the notation introduced above, if
\[
y_0\ge\Phi_\theta(K,\nu)
=\frac{S_\theta+2d_\theta\coth(d_\theta T_\theta/2)}{4\theta\nu}
\]
for some $\theta\in(0,1)$, then the solution blows up
in finite time, with
\[
T^*\le\tau_\theta(K,\nu,y_0)
=\frac1{d_\theta}\log\frac{4\theta\nu y_0-S_\theta+2d_\theta}
{4\theta\nu y_0-S_\theta-2d_\theta}
\le T_\theta<\infty.
\]
\end{theorem}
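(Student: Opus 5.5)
The plan is to follow the solution along the characteristic through $x_0$ and reduce everything to a scalar Riccati inequality for $u_x$ on that curve. The coefficients of this inequality are controlled by the nonlocal quantities $A,B,N$ of \eqref{blowup-quantities}.

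Let $q(t)$ solve $\dot q=u v(t,q)$ with $q(0)=x_0$. Set $V(t)=\sgn(v_0(x_0))\,v(t,q(t))$ and $Y(t)=-\sgn(v_0(x_0))\,u_x(t,q(t))$. First I would rewrite \eqref{3dp} in nonlocal form, applying $P*$ to the momentum equations. Differentiating the $u$-equation in $x$ and using $P_{xx}*f=P*f-f$ gives
\[
u_{xt}+uvu_{xx}=-vu_x^2+(\text{terms with }u,v,n u_x,\rho^2-\bar\rho^2\text{ and }P*,P_x*).
\]
All the remainder terms should be bounded pointwise by products of $A,B,N$ and $\bar\rho^2$. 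The key point is that the term $v_x u_x$, which would otherwise require $\|u_x\|_{L^\infty}$, enters only through the combination $nu_x$, i.e. through $g$. Similarly, $\dot V=(v_t+uvv_x)(t,q)$ is a convolution expression bounded by $C(\bar\rho^2+A^2+B^2+N)^{3/2}$-type quantities, with no $Y$ appearing.

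Second, I would derive a closed differential inequality for $E(t)=\bar\rho^2+A(t)^2+B(t)^2+N(t)$. For $B$ and $A$, I use the transport structure: $n$ solves $n_t+(uvn)_x=(2-3)uv_xn\ldots$, so $P*|n|$ evolves with forcing bounded by $E\cdot B$. For $N$, I would compute the equation for $g=nu_x+\rho^2-\bar\rho^2$. Here the announced cancellation occurs: the bad terms $3uv_xn\,u_x$ and $n\,(uv)_x u_x$ combine, and $n u_{xt}$ produces $-nvu_x^2$, which cancels against the transport part. The result is $g_t+(uvg)_x=$ (terms bounded by $E\cdot|g|$, $E\cdot|n|$, etc.). Convolving with $P$ and using $|P_x|\le P$, I aim for $E'\le 10E^2$. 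This yields
\[
E(t)\le \frac{K^2}{1-10K^2t}\le S_\theta\quad\text{for } t\le T_\theta,
\]
which is exactly how $T_\theta$ is defined. This step is the main obstacle: I would first write every term of $\partial_t g$ explicitly and verify that no $u_x^2$ or $v_x u_x$ term survives unpaired with $n$. The constant $10$ has to come out of careful use of $P*|f|$ bounds together with $\bar\rho\le1$.

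Third, I control $V$ on $[0,T_\theta]$. Integrating $|\dot V|\lesssim E^{3/2}$ against the bound above gives $V(t)\ge\nu-\tfrac{?}{}\int_0^t E^{3/2}$. The choice $S_\theta^{1/2}=K+\tfrac53(1-\theta)\nu$ is designed so that this integral is at most $(1-\theta)\nu$, hence $V\ge\theta\nu$. Feeding $V\ge\theta\nu$ and $E\le S_\theta$ into the Riccati equation gives
\[
\dot Y\ge \theta\nu Y^2-\tfrac{S_\theta}{2}Y-2S_\theta^{3/2}
\]
(up to the precise normalisation). The discriminant of the right-hand side is $d_\theta^2/(\ldots)$, with roots $(S_\theta\pm 2d_\theta)/(4\theta\nu)$.

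Finally, I compare $Y$ with the explicit solution of the Riccati ODE. If $y_0$ exceeds the larger root, that solution blows up at time $\tau_\theta$ given by the stated logarithm. The hypothesis $y_0\ge\Phi_\theta$ is equivalent, via the $\coth$ formula, to $\tau_\theta\le T_\theta$, so the comparison is valid on the whole interval. Hence $u_x(t,q(t))\to-\infty\cdot\sgn$, and $\|u_x\|_{L^\infty}$ is unbounded before $\tau_\theta$. By the continuation criterion for \eqref{3dp}, which involves only $u$, this forces $T^*\le\tau_\theta$.
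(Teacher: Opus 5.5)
Your Steps 1--3 follow the paper's argument closely. You convolve the conservative equations for $u$, $u_x$, $n$, $g$ with $P$ to get $S'\le10S^2$. You bound $\|(\partial_t+uv\partial_x)v\|_{L^\infty}\le3S^{3/2}$; its time integral $\tfrac{3k}{5}[(1-10k^2t)^{-1/2}-1]$ equals $(1-\theta)\nu$ exactly at $t=T_\theta$. You then finish with the Riccati comparison and the $\coth$ identity.

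A few details in your sketch are off, without affecting the structure:
\begin{itemize}
\item The product that forces the use of $g$ is $uu_xv_{xx}$, rewritten via $v_{xx}=v-n$ so that $3uvu_x-uu_xv_{xx}=2uvu_x+u(nu_x)$. It is not $uu_xv_x$, which is harmless because $|uv_x|\le AB$ and $P*|u_x|\le A$.
\item The conservative $n$-equation is $n_t+(uvn)_x=v(g+\bar\rho^2)-2uv_xn$, so $N$ already enters the $B$-estimate.
\item For the $u_x$ part of $A$ you need the conservative form $u_{xt}+(uvu_x)_x=2uv_xu_x+(\text{nonlocal})$, in which $vu_x^2$ drops out; otherwise the $A$-estimate does not close.
\item No continuation criterion is needed at the end. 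If $T^*>\tau_\theta$, then $u\in C([0,\tau_\theta];H^s)\subset C([0,\tau_\theta];W^{1,\infty})$, which contradicts $Y\to\infty$.
\end{itemize}

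The genuine gap is regularity. Everything you write is a computation for smooth solutions, but the theorem is stated for $s>3/2$ with only $n_0\in L^1_{\rm uloc}$. For $s<2$ one knows a priori only $n(t)\in H^{s-2}$. So it is not even clear that $B(t)$ and $N(t)$ are finite for $t>0$, let alone that $\partial_t(P*|n|)$ and $\partial_t(P*|g|)$ can be computed and bounded as you propose. The slope equation, which uses $u_{xx}$ pointwise along $X(t,x_0)$, is likewise formal.

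The paper closes this with an approximation step (its Step~4):
\begin{itemize}
\item Mollify the data with a nonnegative unit-mass kernel. Then $A_{0,\epsilon}\le A_0$ and $B_{0,\epsilon}\le B_0$, but only $N_{0,\epsilon}\le N_0+o(1)$. Mollification does not commute with the nonlinear quantity $g$; the error is controlled by the moduli of continuity of $u_{0,x}$ and $\rho_0$, which uses $s>3/2$.
\item Continuous dependence in $H^s$ gives convergence in $L^\infty\times(W^{1,\infty})^2$ on $[0,T]$ for $T<T^*$. Hence the flows converge, with uniform Jacobian bounds.
\item The characteristic formula for $n_\epsilon$ then gives uniform $L^1_{\rm uloc}$ bounds and convergence of $n_\epsilon$ and $g_\epsilon$ in $C([0,T];L^1_{\rm loc})$. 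Together with the exponential decay of $P$, this lets one pass to the limit in the convolution estimates and in the integrated Riccati inequality along characteristics.
\end{itemize}
The Riccati argument is then applied to the limit solution itself. This is what covers the borderline case $y_0=\Phi_\theta(K,\nu)$, where the mollified data need not satisfy the threshold. Without such a step, your argument proves the theorem only for smooth data.
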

\begin{remark}
For $s\ge2$, the momentum assumption follows from $n_0\in H^{s-2}\subset L^2\subset L^1_{\rm uloc}$, with $\|n_0\|_{L^1_{\rm uloc}}\le\|n_0\|_{L^2}$. The initial convolution quantities are finite: $A_0\le\|u_0\|_{W^{1,\infty}}$, $B_0\le C\|n_0\|_{L^1_{\rm uloc}}$ and $N_0\le\|u_{0,x}\|_{L^\infty}B_0+\|\rho_0^2-\bar\rho^2\|_{L^\infty}$. Setting $\rho_0=\bar\rho=0$ gives the GX case.

For $k,\nu>0$, define $\Phi_\theta(k,\nu)$ and ${\tau_\theta}(k,\nu,y)$ {by replacing $K$ with $k$ in \eqref{threshold-coeff} and the formulas in Theorem~\ref{thm:blowup}, and $y_0$ with $y$ in the lifespan formula.} {The threshold and lifespan bound remain valid for any $k\ge K$}, and $\min_{0<\theta<1}\Phi_\theta(K,\nu)$ is attained in $(0,1)$. The reciprocal scaling $(u,v,\rho)\mapsto(\lambda u,\lambda^{-1}v,\rho)$ yields the sufficient condition for finite-time blow-up
\begin{equation}\label{scaling-criterion}
y_0>\inf_{\lambda>0,\ 0<\theta<1}\frac1\lambda
\Phi_\theta\left(\sqrt{\bar\rho^2+\lambda^2A_0^2+\lambda^{-2}B_0^2+N_0},\frac\nu\lambda\right).
\end{equation}
For $(\lambda,\theta)$ satisfying the strict threshold, the lifespan $T^*$ satisfies
\[
T^*\le{\tau_\theta}\left(\sqrt{\bar\rho^2+\lambda^2A_0^2+\lambda^{-2}B_0^2+N_0},\frac\nu\lambda,\lambda y_0\right).
\]
The coupled comparison system in Section~\ref{sec:blowup} gives a strictly smaller threshold and preserves this scaling invariance.
\end{remark}

\begin{theorem}[Norm inflation]\label{thm:inflation}
Let $\bar\rho\in[0,1]$, $1\le p\le\infty$, $1<r\le\infty$, and
$(\eta_*,u_*,v_*)\in
B^{1/p}_{p,r}\times(B^{1+1/p}_{p,r})^2$.
If $p=\infty$ or $r=\infty$, assume that
$(\eta_*,u_*,v_*)$ is the limit of a sequence in
$\mathcal S(\mathbb R)^3$.
For every $0<\varepsilon<1$, there are initial data
$(\rho_{0,\varepsilon}-\bar\rho,u_{0,\varepsilon},
v_{0,\varepsilon})\in\mathcal S(\mathbb R)^3$ satisfying
\[
\|\rho_{0,\varepsilon}-\bar\rho-\eta_*\|_{B^{1/p}_{p,r}}
+\|u_{0,\varepsilon}-u_*\|_{B^{1+1/p}_{p,r}}
+\|v_{0,\varepsilon}-v_*\|_{B^{1+1/p}_{p,r}}
<\varepsilon
\]
such that the corresponding smooth solution of \eqref{3dp}
has maximal lifespan $T_\varepsilon^*<\varepsilon$ and,
for some $0<t_\varepsilon<T_\varepsilon^*$, satisfies
\[
\|u(t_\varepsilon)-u_*\|_{B^{1+1/p}_{p,r}}
>\varepsilon^{-1},
\qquad
\int_0^{T_\varepsilon^*}
\|u(t)\|_{B^{1+1/p}_{p,r}}\,dt=\infty.
\]
For GX, the same conclusions hold near any
$(u_*,v_*)\in(B^{1+1/p}_{p,r})^2$,
with the same Schwartz approximation assumption
when $p=\infty$ or $r=\infty$.
In this case, $\rho\equiv0$ and the density term is omitted
from the initial distance.
\end{theorem}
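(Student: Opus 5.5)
We prove Theorem~\ref{thm:inflation} by perturbing a Schwartz approximation of the reference data with a small, sharply concentrated profile. Theorem~\ref{thm:blowup} then forces blow-up before time $\varepsilon$, and the continuation criterion involving only $u$ converts this blow-up into divergence of the critical norm of $u$. The mechanism is the scaling gap between the critical norm and the blow-up threshold. The space $B^{1+1/p}_{p,r}$ with $r>1$ does not embed into $W^{1,\infty}$, so a perturbation can be small in $B^{1+1/p}_{p,r}$ while $u_{0,x}(x_0)$ is very large.

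\textbf{Step 1 (approximation).} Choose Schwartz functions $(\eta_1,u_1,v_1)$ within $\varepsilon/3$ of $(\eta_*,u_*,v_*)$. When $p,r<\infty$ this follows from density; otherwise it is exactly the stated assumption. Pick $x_0$ with $v_1(x_0)\neq0$, first modifying $v_1$ by a small bump if $v_1\equiv0$. Since $v$ is not perturbed afterwards, $\nu=|v_1(x_0)|$ is fixed.

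\textbf{Step 2 (the perturbation).} Set $u_{0}=u_1+w_N$, where
\[
w_N(x)=-\sgn(v_1(x_0))\,\frac{\delta}{N\,(\log N)^{\gamma}}\sum_{j\le N}2^{-j}\phi\bigl(2^j(x-x_0)\bigr),
\]
with $\phi$ a fixed Schwartz function satisfying $\phi'(0)=1$, and $\gamma$ chosen so that $1/r<\gamma\le1$ when $r<\infty$. This is the standard lacunary construction. Each dyadic block has $B^{1+1/p}_{p,r}$ size about $2^{j}2^{-j}2^{-j/p}\cdot2^{j/p}$, i.e.\ of order one, so
\[
\|w_N\|_{B^{1+1/p}_{p,r}}\lesssim\delta\,N^{1/r-1}(\log N)^{-\gamma}\to0 \quad (r>1).
\]
Up to using logarithmic weights $j^{-\gamma}$ in place of the uniform factor, the derivative at the center is
\[
y_0=-\sgn(v_1(x_0))\,u_{0,x}(x_0)\gtrsim\delta\sum_{j\le N}j^{-\gamma}\to\infty,
\]
because $\gamma\le1$ and $r>1$ let us take the weights $j^{-\gamma}$ in $\ell^r\setminus\ell^1$. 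Meanwhile the $W^{1,\infty}$ norm of $w_N$ grows only through $u_x$. The quantity $A_0$ involves $P*|u_x|$, which is controlled by $\|u_0\|_{L^\infty}$ plus the total variation, and the total variation is bounded because $\|w_{N,x}\|_{L^1}\lesssim\sum_j j^{-\gamma}2^{-j}<\infty$. Likewise $B_0$ depends only on $v_1$. The delicate term is $N_0$, which contains $nu_x$. Since $P*(|n|\,|u_x|)\le\|n\|_{L^\infty}\|P*|u_x|\|_{L^\infty}$, it is also bounded. Hence $K$ stays bounded uniformly in $N$.

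\textbf{Step 3 (lifespan).} Fix $\theta=1/2$. Because $d_\theta$ is bounded, $\Phi_\theta(K,\nu)$ is bounded uniformly in $N$, while $y_0\to\infty$. The explicit formula gives
\[
\tau_\theta\sim\frac{1}{d_\theta}\log\Bigl(1+\frac{4d_\theta}{4\theta\nu y_0}\Bigr)\approx\frac{1}{\theta\nu y_0}\to0.
\]
Theorem~\ref{thm:blowup} then yields $T^*_\varepsilon<\varepsilon$ once $N$ is large. For $s\ge2$ the data lie in $H^s$ with $n_0\in L^1_{\rm uloc}$, so the theorem applies.

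\textbf{Step 4 (norm divergence).} This is the main obstacle. It requires a continuation criterion saying that if $\int_0^{T}\|u\|_{B^{1+1/p}_{p,r}}\,dt<\infty$, then the solution extends beyond $T$. We expect to derive it from the Besov criterion in \cite{r15}, via the transport estimates for $m,n,\rho$, in which the logarithmic losses are controlled by $\|u\|_{B^{1}_{\infty,\infty}}$-type quantities dominated by the critical norm. If that bound is granted, finite $T^*_\varepsilon$ forces $\int_0^{T^*_\varepsilon}\|u\|\,dt=\infty$. Since $T^*_\varepsilon<\varepsilon$, the norm $\|u(t)\|$ cannot stay below $2\varepsilon^{-1}+\|u_*\|$ on the whole interval, which yields $t_\varepsilon$ with $\|u(t_\varepsilon)-u_*\|>\varepsilon^{-1}$. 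The GX case is identical with $\rho\equiv0$.
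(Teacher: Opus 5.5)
\textbf{Steps 1--3.} These essentially reproduce the paper's construction, provided the weights $a_j=j^{-\gamma}\in\ell^r\setminus\ell^1$ replace the uniform prefactor. With those weights, the perturbation $\delta\sum_{j\le N}j^{-\gamma}2^{-j}\phi(2^j(x-x_0))$ has three properties. Its critical norm is $\lesssim\delta\|(j^{-\gamma})\|_{\ell^r}$, which is small because $\delta$ is fixed small, not because it tends to zero. Its $W^{1,1}$ norm is bounded, so $A_0$, $B_0$ and $N_0$ are uniformly bounded. Its slope $y_0$ tends to infinity, so $\tau_\theta\approx1/(\theta\nu y_0)\to0$. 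You should drop the first display, though. With the prefactor $\delta/(N(\log N)^\gamma)$, the slope contribution at $x_0$ is $\delta(\log N)^{-\gamma}\to0$.

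\textbf{Step 4 is a genuine gap.} This step is exactly the second conclusion of the theorem, and you only assume it. The criterion you hope for, namely that finiteness of $\int_0^T\|u\|_{B^{1+1/p}_{p,r}}\,dt$ \emph{alone} forces continuation, is not what transport estimates yield. The momentum $n$ is transported by $uv$ with source $3uv_xn$. The coefficient $uv_x-5vu_x$ in the $m$ equation contains $v_x$. No norm of $u$ controls $\|v\|_{W^{1,\infty}}$.

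The missing idea is that the blow-up argument itself supplies this control. Since $T^*<T_{1/2}<(10k^2)^{-1}$ with $k$ independent of $N$, the bound $S(t)\le k^2/(1-10k^2t)$ from Step~1 of the blow-up proof gives
\[
\sup_{t<T^*}\bigl(\|u\|_{L^\infty}+\|v\|_{W^{1,\infty}}\bigr)<\infty.
\]
Only under this hypothesis does the paper prove a one-component criterion, in four moves.
\begin{enumerate}
\item Set $Z=e+\|P*|m|^2\|_{L^\infty}^{1/2}+\|\rho\|_{L^\infty}^2$. The equation for $m^2$ and the density equation give $Z(t)\le Z(0)+C\int_0^t(1+\|u_x\|_{L^\infty})Z\,d\tau$. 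This is where the bounds on $v$, $v_x$ and $u$ are used.
\item A Littlewood--Paley splitting gives $\|u_x\|_{L^\infty}\le C+C(J+1)\|u\|_{B^1_{\infty,\infty}}+C2^{-J/2}Z$.
\item Choosing $J=\lceil2\log_2Z\rceil$ yields a Gronwall inequality for $\log Z$ with coefficient $\|u\|_{B^1_{\infty,\infty}}$.
\item Hence finiteness of $\int_0^{T^*}\|u\|_{B^1_{\infty,\infty}}\,dt$ would bound $\|u_x\|_{L^\infty}$, and therefore $Q$. This contradicts $\int_0^{T^*}Q\,dt=\infty$ from the continuation criterion.
\end{enumerate}
The embedding $B^{1+1/p}_{p,r}\hookrightarrow B^1_{\infty,\infty}$ then gives the integral divergence. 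Your final pigeonhole step producing $t_\varepsilon$ is fine once this is in place.
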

\begin{remark}
For $p,r<\infty$, Schwartz functions are dense in the
stated Besov spaces \cite{r1}, so the approximation
assumption is automatic.
In particular, the theorem applies to all initial data in
$H^{1/2}\times(H^{3/2})^2$ and, for GX system, $(H^{3/2})^2$.
The integral divergence implies
\[
\limsup_{t\uparrow T_\varepsilon^*}
\|u(t)-u_*\|_{B^{1+1/p}_{p,r}}=\infty.
\]
No sign condition is imposed on the perturbed momenta.
The isomorphism $1-\partial_x^2$ gives the equivalent
momentum formulation in
$B^{1/p}_{p,r}\times(B^{-1+1/p}_{p,r})^2$,
with the same Schwartz approximation requirement.
\end{remark}
\begin{theorem}[Global well-posedness]\label{thm:global}
Let $\sigma>-1/2$ and $(m_0,n_0)\in(H^\sigma(\mathbb R))^2$.
Assume that $\varepsilon_1m_0\ge0$ and $\varepsilon_2n_0\ge0$
in the sense of distributions, for some
$\varepsilon_1,\varepsilon_2\in\{-1,1\}$.
Then \eqref{gx} has a unique global strong solution
\[
(m,n)\in C(\mathbb R;(H^\sigma)^2)
\cap C^1(\mathbb R;(H^{\sigma-1})^2),
\]
with $\varepsilon_1m(t)\ge0$ and $\varepsilon_2n(t)\ge0$
for all $t\in\mathbb R$.
For every $T>0$, the solution map is continuous
from the initial data with these prescribed signs,
equipped with the $(H^\sigma)^2$ norm,
into $C([-T,T];(H^\sigma)^2)$.

For $-1/2<\sigma\le0$, the solution satisfies
\begin{equation}\label{negative-bound}
\begin{aligned}
\|m(t)\|_{H^\sigma}+\|n(t)\|_{H^\sigma}
&\le
\exp\left(
2|t|\left|\int_{\mathbb R}
(u_0v_0+u_{0,x}v_{0,x})\,dx\right|
\right)\\
&\quad\times
\bigl(\|m_0\|_{H^\sigma}+\|n_0\|_{H^\sigma}\bigr),
\qquad t\in\mathbb R.
\end{aligned}
\end{equation}
\end{theorem}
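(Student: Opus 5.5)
The plan is to obtain a priori bounds that do not depend on the regularity index, using the sign structure, and then close by approximating with smooth data and passing to the limit. Reversibility handles negative times: the map $(m,n,t,x)\mapsto(m,n,-t,-x)$ after $u(x)\mapsto -u(-x)$ changes signs appropriately, so it suffices to treat $t\ge0$. Signs are preserved along characteristics. Let $q$ be the flow of $uv$, that is, $q_t=u(q)v(q)$. Then $m(t,q)q_x^3=m_0\exp(\ldots)$; more precisely,
\[
\frac{d}{dt}\bigl(m(t,q)\bigr)=-3(vu_x)(q)\,m(t,q),
\]
so $m$ keeps its sign. By symmetry $n$ keeps its sign too, first for smooth data with $s$ large (using the local theory of \cite{r10}). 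With fixed signs we have $u=P*m$, so $|u_x|\le |u|$ and $|u|=\varepsilon_1 u$; similarly for $v$.

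\textbf{Step 1: a conserved or controlled quantity.} I expect $H=\int (u v+u_xv_x)\,dx=\int m v\,dx=\int u n\,dx$ to be conserved (a Hamiltonian of GX); this is the constant appearing in \eqref{negative-bound}. By the signs, $\|u\|_{L^\infty}\|v\|_{L^\infty}$ is controlled. For instance, $\varepsilon_1 u\ge0$ and $\varepsilon_1\int m\,dx=\int|m|$, but $m\in H^\sigma$ need not be $L^1$. Instead I would use the bound $\|u\|_{L^\infty}\lesssim \|m\|_{H^\sigma}$, valid when $\sigma>-1/2$, and estimate the energy directly.

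\textbf{Step 2: the $H^\sigma$ energy estimate.} Write the equation as $m_t+(uvm)_x+(2vu_x-uv_x)m=0$. Test against $D^{2\sigma}m$ and estimate the commutator $[D^\sigma,uv]\partial_x$ together with the lower-order terms. The key point is that only $\|uv\|_{W^{1,\infty}}$ and $\|v u_x\|_{L^\infty}$ enter. By the signs, $|u_x|\le\varepsilon_1u$ and $|v_x|\le \varepsilon_2 v$, so these coefficients are bounded by $\|uv\|_{L^\infty}$-type quantities. The point to aim for is the pointwise bound $\|u v\|_{L^\infty}\lesssim |H|$, using $uv$ of fixed sign, $(uv)_x$ bounded by $2|uv|$, and $\int |uv|\le |H|$ (since $u_xv_x$ has controlled size relative to $uv$, hence $\int|uv|\lesssim|H|$). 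A function $f\ge0$ with $|f'|\le 2f$ satisfies $f\le C\int f$. This gives Grönwall with rate $2|H|$, hence \eqref{negative-bound} for $-1/2<\sigma\le0$. For $\sigma>0$ I would bootstrap: the $H^\sigma$ norm grows at most exponentially with rate $C(\|u\|_{W^{1,\infty}},\|v\|_{W^{1,\infty}})$, and these are already bounded.

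\textbf{Step 3: existence, uniqueness and continuity.} Mollify the data, preserving the signs, and obtain global smooth solutions by the continuation criterion. The bounds above are uniform. Compactness via Aubin--Lions in $C([-T,T];H^{\sigma-1}_{\rm loc})$ gives a limit, and the equation passes to the limit since $u,v\in W^{1,\infty}$ converge locally uniformly, with the products $uvm$ defined for $m\in H^\sigma$, $\sigma>-1/2$. Uniqueness and continuous dependence come from a difference estimate in $H^{\sigma-1}$ with Lipschitz coefficients. Continuity into $C(H^\sigma)$ then follows by the Bona--Smith argument.

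The main obstacle is the sharp pointwise bound on $uv$ and on $vu_x$ in terms of $H$ at the low regularity $\sigma$ near $-1/2$, and making the commutator estimate close using only $W^{1,\infty}$ control.
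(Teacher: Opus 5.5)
Your preliminary reductions are fine: the system is invariant under $(t,x)\mapsto(-t,-x)$, the signs are transported, $|u_x|\le|u|$ and $|v_x|\le|v|$, and $H$ is conserved because $vm-un=\partial_x(uv_x-vu_x)$. The two estimates everything rests on are not established, however.

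First, your route to $\|uv\|_{L^\infty}\lesssim|H|$ goes through $\int|uv|\,dx\lesssim|H|$, and that inequality is false. Take $m,n\ge0$ approximating $\delta_0$ and $\delta_a$, $a>0$. Then $u=P$, $v=P(\cdot-a)$ and $H=v(0)=\tfrac12e^{-a}$, but $\int uv\,dx=(P*P)(a)=\tfrac14(1+a)e^{-a}$. On $[0,a]$ one has $u_xv_x=-uv$, so the pointwise bound $|u_xv_x|\le|uv|$ does not prevent the cancellation. For the same reason, $\|u\|_{L^\infty}\|v\|_{L^\infty}$ is not controlled by $H$.

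The pointwise bound itself is true with constant $\tfrac12$, and this example attains it. The paper writes
\[
|uv|(x)=\iint P(x-y)P(x-z)\,\varepsilon_1m(y)\,\varepsilon_2n(z)\,dy\,dz,\qquad
|H|=\iint P(y-z)\,\varepsilon_1m(y)\,\varepsilon_2n(z)\,dy\,dz,
\]
and uses $P(x-y)P(x-z)\le\tfrac12P(y-z)$. Together with $|P_x|\le P$, this gives $|uv|,|vu_x|,|uv_x|\le|H|/2$ and $\|(uv)_x\|_{L^\infty}\le|H|$. Once this is repaired, your scheme works for $\sigma\ge0$, since $\frac{d}{dt}\|m\|_{L^2}^2=\int(uv_x-5vu_x)m^2\,dx$ involves only these products.

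Second, and more seriously, Step 2 does not close for $-1/2<\sigma<0$. The zeroth-order term $(2vu_x-uv_x)m$ cannot be bounded in $H^\sigma$ by $\|2vu_x-uv_x\|_{L^\infty}\|m\|_{H^\sigma}$, because the multiplier norm on $H^\sigma$, $\sigma<0$, is not controlled by the $L^\infty$ norm. The smoothness of this coefficient is no help either: its derivative contains $-2vm+un$, so it is controlled only by $\|m\|_{H^\sigma}$ and $\|n\|_{H^\sigma}$ themselves. That yields a superlinear inequality and no global bound.

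So the claim that only $\|uv\|_{W^{1,\infty}}$ and $\|vu_x\|_{L^\infty}$ enter is exactly the obstacle you leave open. The sign of $m$ must be used inside the $H^\sigma$ estimate, not only to bound the coefficients. The missing ingredient is positivity of the Bessel kernel $K_\theta$ of $(1-\partial_x^2)^{-\theta}$, $\theta=-\sigma$. For $f\ge0$ and bounded $h\ge0$,
\[
\|hf\|_{H^{-\theta}}^2=\iint K_\theta(x-y)\,h(x)f(x)\,h(y)f(y)\,dx\,dy\le\|h\|_{L^\infty}^2\|f\|_{H^{-\theta}}^2.
\]
The paper avoids energy estimates here altogether and proceeds as follows.
\begin{itemize}
\item It uses the Lagrangian formula $\varepsilon_1m(t)=(h_m\varepsilon_1m_0/X_a)\circ X^{-1}$, where $0<h_m\le e^{3|H||t|/2}$; the weight $h_m$ is handled by the positivity bound above.
\item It applies a push-forward bound $\|(f/X_a)\circ X^{-1}\|_{H^{-\theta}}\le L^{1/2}\|f\|_{H^{-\theta}}$. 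This is obtained by duality from composition on $L^2$ and $H^1$ plus interpolation, and needs only $L^{-1}\le X_a\le L$ with $L=e^{|H||t|}$.
\item The product $e^{|H||t|/2}e^{3|H||t|/2}$ is exactly the factor in \eqref{negative-bound}.
\end{itemize}
With these bounds, existence, uniqueness and continuous dependence follow from the local theory, whose existence time depends only on the norm. One applies it to sign-preserving approximations $e^{\delta\partial_x^2}m_0$, $e^{\delta\partial_x^2}n_0$, so Aubin--Lions and Bona--Smith are not needed.
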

\begin{remark}
For GX data satisfying the assumptions of
Theorem~\ref{thm:blowup}, the sign condition rules out
the stated blow-up condition.
Indeed, if $m_0$ has a fixed sign, then $|P_x|\le P$ gives
\[
y_0\le\|u_{0,x}\|_{L^\infty}
\le\|u_0\|_{L^\infty}\le A_0\le K.
\]
Since $0<\nu\le B_0\le K$ and $S_\theta\ge K^2$,
\[
\Phi_\theta(K,\nu)
>\sqrt{\frac{2S_\theta^{3/2}}{\theta\nu}}
\ge\sqrt{2}\,K>K,
\qquad 0<\theta<1.
\]
Hence $y_0\ge\Phi_\theta(K,\nu)$ cannot hold.
\end{remark}

\subsection{Main ideas}\label{sec:contributions}
In our earlier work \cite{r15}, we estimated $\|u_x\|_{L^1}$
by testing the slope equation against a regularized sign
function.
Integration by parts in the transport term $uvu_{xx}$
cancels the contribution of $vu_x^2$.
The nonlocal terms were then controlled using
$\|u\|_{W^{1,1}}$ and $\|n\|_{L^\infty}$,
with the latter bound obtained by Sobolev embedding
for $s>5/2$.

We estimate the product $nu_x$ in this paper.
This choice follows from rewriting the nonlocal term
$3uvu_x-uu_xv_{xx}$ as $2uvu_x+u(nu_x)$ using $v_{xx}=v-n$.
In the conservative equation for $nu_x$, the term
$-vnu_x^2$ cancels with its counterpart in $(uv)_xnu_x$,
leaving the local term $-uv_xnu_x$.
This cancellation allows us to close the integral estimates
for $A$, $B$, and $N$ defined in \eqref{blowup-quantities}.
Here $A$ bounds the convolutions involving $u$ and $u_x$,
and $B$ bounds both $v$ and $v_x$ because $v=P*n$ and
$|P_x|\le P$.
For GX, $N$ measures the convolution of $|nu_x|$.

Thus, these estimates bound the nonlocal terms in
the slope equation without an a priori bound on
$\|u_x\|_{L^\infty}$ or $\|n\|_{L^\infty}$.
The quadratic term $-vu_x^2$ remains in that equation
and yields a Riccati inequality along characteristics.
An approximation argument extends the result to $s>3/2$
with $n_0\in L^1_{\rm uloc}$, removing the global
$W^{1,1}$ assumption on $u_0$ used in \cite{r15}.

For 3DP, adding $\rho^2$ to $nu_x$ also cancels the
local term involving $\rho^2vu_x$ in the conservative
equation.
Subtracting the constant background gives
$g=nu_x+\rho^2-\bar\rho^2$, which replaces $nu_x$
in the definition of $N$.
The resulting estimates require no global $L^1$
assumption on $\rho_0^2-\bar\rho^2$.

To prove norm inflation, we perturb background data
satisfying the assumptions of Theorem~\ref{thm:inflation}
by functions with small critical norms and large slopes,
while keeping the initial convolution quantities
uniformly bounded.
The blow-up result gives an explicit upper bound on
the lifespan that tends to zero along this sequence.
A continuation criterion involving only $u$ then
forces the critical Besov norm of $u$ to become
arbitrarily large.

For momenta with prescribed signs, we extend the global
well-posedness result of \cite{r22} from $\sigma>1/2$
to $\sigma>-1/2$.
The conserved mixed energy bounds the transport coefficients,
and positivity of the Bessel kernel gives the $H^\sigma$
estimates for $-1/2<\sigma<0$.
The two momentum signs may differ, and no smallness or
global $L^1$ assumption is required.

The paper is organized as follows.
Section~\ref{sec:prelim} collects the lemma and estimates
used in the proofs.
Section~\ref{sec:local} establishes local well-posedness
and continuation criteria.
The proofs of the blow-up, norm-inflation, and global
well-posedness results are presented in
Sections~\ref{sec:blowup}--\ref{sec:global}, respectively.
Section~\ref{sec:examples} gives examples illustrating
the blow-up and norm-inflation results.

\section{Preliminaries}\label{sec:prelim}
{We collect the Littlewood--Paley and transport estimates used below. All are stated in nonhomogeneous spaces and have periodic versions on $\mathbb T$.}

{Fix a standard nonhomogeneous dyadic decomposition $f=\sum_{j\ge-1}\Delta_jf$ and set $S_jf=\sum_{-1\le k<j}\Delta_kf$. The associated Besov norm is}
\[\|f\|_{B^a_{p,r}}=\big\|(2^{ja}\|\Delta_jf\|_{{L^p}})_{j\ge-1}\big\|_{\ell^r}.\]
We use Bernstein's inequalities and Bony's decomposition $fg=T_fg+T_gf+R(f,g)$, where $T_fg=\sum_jS_{j-1}f\Delta_jg$ and $R(f,g)=\sum_{|j-k|\le1}\Delta_jf\Delta_kg$. {For the paraproduct, remainder, and embedding estimates, we refer to \cite{r1}.}
\begin{lemma}[\cite{r11}]\label{lem:commutator}
{For $s>0$, there is a constant $c_s>0$ such that}
\[\|[D^s,f]g\|_{{L^2}}\le c_s\bigl(\|D^sf\|_{{L^2}}\|g\|_{{L^{\infty}}}+\|f_x\|_{{L^{\infty}}}\|D^{s-1}g\|_{{L^2}}\bigr).\]
\end{lemma}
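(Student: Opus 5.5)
This is the Kato--Ponce commutator estimate. The plan is to prove it by Littlewood--Paley analysis and Bony's decomposition, using the nonhomogeneous Sobolev norms $\|D^a h\|_{L^2}\simeq\|h\|_{B^a_{2,2}}$.

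Write
\[
[D^s,f]g=D^s(fg)-fD^sg .
\]
Expand both products with Bony's decomposition. This gives
\[
[D^s,f]g=\bigl([D^s,T_f]g\bigr)+\bigl(D^sT_gf-T_{D^sg}f\bigr)+\bigl(D^sR(f,g)-R(f,D^sg)\bigr).
\]

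\textbf{Low-frequency $f$ against high-frequency $g$.} This is the main term: $[D^s,T_f]g=\sum_j[D^s,S_{j-1}f]\Delta_jg$.

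1. Each summand is spectrally localized in an annulus $\sim 2^j$, so we may insert a smooth multiplier $\tilde\varphi(2^{-j}\xi)$ and write $D^s\tilde\Delta_j$ as convolution with a kernel $h_j$.
2. By the mean value formula,
\[
[h_j*,S_{j-1}f]\Delta_jg(x)=\int h_j(y)\bigl(S_{j-1}f(x-y)-S_{j-1}f(x)\bigr)\Delta_jg(x-y)\,dy .
\]
3. Since $\||y|h_j\|_{L^1}\lesssim 2^{j(s-1)}$, the summand is bounded in $L^2$ by
\[
C\,2^{j(s-1)}\|\partial_xS_{j-1}f\|_{L^\infty}\|\Delta_jg\|_{L^2}\lesssim \|f_x\|_{L^\infty}\,2^{j(s-1)}\|\Delta_jg\|_{L^2}.
\]
4. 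Almost orthogonality of the pieces then gives the bound $\|f_x\|_{L^\infty}\|D^{s-1}g\|_{L^2}$.
5. The block $j=-1$ is handled directly, because there $S_{j-1}f=0$.

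\textbf{High-frequency $f$ against low-frequency $g$.} For $D^sT_gf$, the paraproduct estimate gives
\[
\|T_gf\|_{H^s}\lesssim\|g\|_{L^\infty}\|f\|_{H^s}.
\]
One would like the homogeneous quantity $\|D^sf\|_{L^2}$ here, and it is exactly that since the norms are nonhomogeneous: $\|f\|_{H^s}=\|D^sf\|_{L^2}$.

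For $T_{D^sg}f=\sum_j S_{j-1}D^sg\,\Delta_jf$, Bernstein gives
\[
\|S_{j-1}D^sg\|_{L^\infty}\lesssim 2^{js}\|g\|_{L^\infty}\quad\text{for }s>0,
\]
so each block is bounded by $\|g\|_{L^\infty}2^{js}\|\Delta_jf\|_{L^2}$. Summing in $\ell^2$ gives $\|g\|_{L^\infty}\|D^sf\|_{L^2}$.

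\textbf{Remainders.} Since $s>0$, the remainder estimate gives
\[
\|D^sR(f,g)\|_{L^2}\lesssim\|g\|_{B^0_{\infty,\infty}}\|f\|_{B^s_{2,2}}\lesssim\|g\|_{L^\infty}\|D^sf\|_{L^2}.
\]
For $R(f,D^sg)=\sum_{|j-k|\le1}\Delta_jf\,\Delta_kD^sg$, move $2^{ks}$ onto $\Delta_jf$, using $\|\Delta_kD^sg\|_{L^\infty}\lesssim2^{ks}\|g\|_{L^\infty}$. This yields blocks supported in balls of radius $\sim2^j$ with $L^2$ norm $\lesssim\|g\|_{L^\infty}2^{js}\|\Delta_jf\|_{L^2}$. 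Because $s>0$, the ball-supported summation lemma applies and gives the same bound.

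\textbf{Main obstacle.} The hardest step is the first one, in particular the low frequencies. A derivative must land on $f$ rather than produce $\|f\|_{L^\infty}$, and the $j=-1$ and low-$j$ pieces need separate care since $D^s$ is not homogeneous. Where needed, I would absorb these pieces into the $\|g\|_{L^\infty}\|D^sf\|_{L^2}$ term. The result itself is classical (Kato--Ponce), so alternatively one may simply cite \cite{r11}.
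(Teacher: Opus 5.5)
The paper does not prove this lemma; it only cites Kato--Ponce \cite{r11}, which is also your fallback. Your self-contained Littlewood--Paley argument, however, has a genuine gap.

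The pieces $[D^s,T_f]g$, $D^sT_gf$, $T_{D^sg}f$ and $D^sR(f,g)$ are handled correctly. In the first of these, $S_{j-1}f=0$ for $j\le0$, so the low-frequency worry in your last paragraph never arises.

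The failure is in $R(f,D^sg)=\sum_j\Delta_jf\,\tilde\Delta_jD^sg$. After you move $2^{js}$ onto $\Delta_jf$, you have blocks $u_j$ with spectrum in balls $2^jB$ and $\|u_j\|_{L^2}\lesssim c_j:=\|g\|_{L^\infty}2^{js}\|\Delta_jf\|_{L^2}$, where $(c_j)\in\ell^2$. You then need $\|\sum_ju_j\|_{L^2}$, i.e.\ a bound in $B^0_{2,2}$.
\begin{itemize}
\item The summation lemma for ball-supported blocks requires the regularity index of the \emph{target} space to be positive. Here that index is $0$, because the $s$ has already been spent, so $s>0$ is irrelevant and the lemma gives nothing.
\item The triangle inequality only yields $\sum_jc_j$, i.e.\ $\|g\|_{L^\infty}\|f\|_{B^s_{2,1}}$ rather than $\|g\|_{L^\infty}\|f\|_{H^s}$.
\item The Besov remainder estimate is likewise unavailable at total index $s+(-s)=0$ for $f\in B^s_{2,2}$, $D^sg\in B^{-s}_{\infty,\infty}$.
\item Using $\|\Delta_jf\|_{L^\infty}\lesssim2^{-j}\|f_x\|_{L^\infty}$ instead hits the same index-zero obstruction.
\item The commutator form $D^sR(f,g)-R(f,D^sg)$ gives no cancellation, since $\Delta_jf\,\tilde\Delta_jg$ can have output frequency far below $2^j$.
\end{itemize}

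What is missing is exactly the harmonic-analysis input in Kato--Ponce. For $j\ge0$, write $2^{js}\Delta_jf=\psi_j(D)D^sf$ and $2^{-js}\tilde\Delta_jD^sg=\tilde\psi_j(D)g$, with uniformly controlled Littlewood--Paley symbols. The block $j=-1$ is trivially bounded by $\|f\|_{L^2}\|g\|_{L^\infty}$. You must then show that $(F,G)\mapsto\sum_j\psi_j(D)F\,\tilde\psi_j(D)G$ is bounded from $L^2\times L^\infty$ to $L^2$. Argue by duality: since the $j$th product has spectrum in $|\xi|\lesssim2^j$, pair it with $S_{j+N}h$. Cauchy--Schwarz and Plancherel then reduce the claim to
$\sum_j\int|\tilde\psi_j(D)G|^2|S_{j+N}h|^2\,dx\lesssim\|G\|_{\mathrm{BMO}}^2\|h\|_{L^2}^2$.
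This is the Carleson embedding underlying the Coifman--Meyer theorem. The crude bound $|\tilde\psi_j(D)G|\le C\|G\|_{L^\infty}$ would instead produce the divergent sum $\sum_j\|S_{j+N}h\|_{L^2}^2$. With this step supplied, or with the citation of \cite{r11} as in the paper, your argument is complete.
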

\begin{lemma}[\cite{r10,r9}]\label{lem:product}
{For $\sigma>1/2$, there is a constant $c_\sigma>0$ such that}
\[\|fg\|_{H^{\sigma-1}}\le c_\sigma\|f\|_{H^\sigma}\|g\|_{H^{\sigma-1}}.\]
\end{lemma}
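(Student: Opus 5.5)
The plan is to prove the estimate with Bony's decomposition
\[
fg=T_fg+T_gf+R(f,g)
\]
and to bound each of the three pieces in $H^{\sigma-1}=B^{\sigma-1}_{2,2}$ by the paraproduct and remainder estimates of \cite{r1}. The hypothesis $\sigma>1/2$ enters in two ways: through the embedding $H^\sigma\subset L^\infty$, and through the positivity of the regularity sum $\sigma+(\sigma-1)=2\sigma-1>0$ needed for the remainder. An alternative for $\sigma<1$ is duality: the estimate is equivalent to $\|fh\|_{H^{1-\sigma}}\lesssim\|f\|_{H^\sigma}\|h\|_{H^{1-\sigma}}$, which is a standard Sobolev multiplier estimate. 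I will use the paraproduct route, since it covers all $\sigma>1/2$ at once.

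\textbf{Step 1: the term $T_fg$.} Since $\|S_{j-1}f\|_{L^\infty}\le C\|f\|_{L^\infty}$ and the blocks $S_{j-1}f\,\Delta_jg$ have Fourier support in dyadic annuli, we get
\[
\|T_fg\|_{H^{\sigma-1}}\lesssim\|f\|_{L^\infty}\|g\|_{H^{\sigma-1}}\lesssim\|f\|_{H^\sigma}\|g\|_{H^{\sigma-1}}.
\]
This holds for every real index $\sigma-1$, and the last inequality uses $\sigma>1/2$.

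\textbf{Step 2: the term $T_gf$, where the low frequencies of $g$ multiply the high frequencies of $f$.} Bernstein's inequality gives $\|\Delta_kg\|_{L^\infty}\lesssim2^{k/2}\|\Delta_kg\|_{L^2}$, so $g\in B^{\sigma-3/2}_{\infty,2}$. I split into three cases.
\begin{itemize}
\item If $\sigma<3/2$, the index $\sigma-3/2$ is negative. Then $\|S_{j-1}g\|_{L^\infty}\lesssim 2^{j(3/2-\sigma)}\|g\|_{H^{\sigma-1}}$, and the paraproduct estimate with a negative-index multiplier gives
\[
\|T_gf\|_{H^{2\sigma-3/2}}\lesssim\|g\|_{B^{\sigma-3/2}_{\infty,\infty}}\|f\|_{H^\sigma}.
\]
Since $2\sigma-3/2\ge\sigma-1$ exactly when $\sigma\ge1/2$, this is enough.
\item If $\sigma>3/2$, then $g\in L^\infty$ and $T_gf\in H^\sigma\subset H^{\sigma-1}$ directly.
\item If $\sigma=3/2$, I use $g\in B^{-\delta}_{\infty,\infty}$ for a small $\delta>0$ and land in $H^{\sigma-\delta}\subset H^{\sigma-1}$.
\end{itemize}

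\textbf{Step 3: the remainder $R(f,g)$, which is the main obstacle.} The remainder estimate requires the sum of regularities to be positive, here $2\sigma-1>0$, which is precisely $\sigma>1/2$. With $f\in B^\sigma_{2,2}$ and $g\in B^{\sigma-1}_{2,2}$, it gives $R(f,g)\in B^{2\sigma-1}_{1,1}$. Bernstein's inequality then gives the embedding $B^{2\sigma-1}_{1,1}\subset B^{2\sigma-3/2}_{2,1}\subset H^{\sigma-1}$, where the last inclusion needs $2\sigma-3/2\ge\sigma-1$, true for $\sigma>1/2$. Here the strict inequality $\sigma>1/2$ is genuinely needed: at $\sigma=1/2$ the remainder sum diverges logarithmically. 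I expect this to be the only delicate point, and I will check the constant's dependence on $\sigma$ there.

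Summing the three bounds yields the lemma with $c_\sigma$ depending only on $\sigma$. The same argument, with the periodic Littlewood--Paley decomposition, gives the version on $\mathbb T$.
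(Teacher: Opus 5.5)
Your proof is correct. The paper gives no argument for this lemma; it simply imports the estimate from \cite{r10,r9}. Your Bony-decomposition proof is therefore a genuine, self-contained alternative, and it is built from exactly the Littlewood--Paley tools the paper already lists in Section~\ref{sec:prelim}. Each piece is handled with the right estimate:
\begin{itemize}
\item $T_fg$ is controlled through $\|S_{j-1}f\|_{L^\infty}\lesssim\|f\|_{H^\sigma}$.
\item $T_gf$ is controlled by the negative-index paraproduct bound with $g\in B^{\sigma-3/2}_{\infty,\infty}$. You correctly separate the case $\sigma=3/2$, where the constant $(3/2-\sigma)^{-1}$ degenerates.
\item $R(f,g)$ goes into $B^{2\sigma-1}_{1,1}\hookrightarrow B^{2\sigma-3/2}_{2,1}\hookrightarrow H^{\sigma-1}$, and this is exactly where $2\sigma-1>0$ is needed.
\end{itemize}

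The more elementary alternative works directly on the Fourier side. For $\sigma\ge1$ one splits $\langle\xi\rangle^{\sigma-1}\lesssim\langle\xi-\eta\rangle^{\sigma-1}+\langle\eta\rangle^{\sigma-1}$ over the frequency convolution. For $1/2<\sigma<1$ one uses the duality reduction to $\|fh\|_{H^{1-\sigma}}\lesssim\|f\|_{H^\sigma}\|h\|_{H^{1-\sigma}}$ that you mention. That route needs only Cauchy--Schwarz and Young's inequality, but it splits into cases according to the sign of $\sigma-1$. Your route relies on the paraproduct and remainder theorems of \cite{r1}. In exchange, it isolates precisely the two places where $\sigma>1/2$ enters, and it transfers unchanged to $\mathbb T$ and to general Besov indices.

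One small point worth adding: for $\sigma<1$ the product $fg$ is not a priori a pointwise product. It should be understood either through the Bony decomposition, whose remainder series converges because $2\sigma-1>0$, or by extending the estimate from Schwartz functions by density.
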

\begin{lemma}[\cite{r8}]\label{lem:friedrichs}
{Let $g_x\in L^\infty$ and let $J_\epsilon$ be a Friedrichs mollifier. There is a constant $C>0$, independent of $\epsilon$, such that}
{\[\|[J_\epsilon,g]\partial_xf\|_{L^2}\le C\|\partial_xg\|_{L^\infty}\|f\|_{L^2},\qquad f\in L^2.\]}
\end{lemma}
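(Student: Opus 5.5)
This is the Friedrichs commutator estimate, which I would prove by writing the commutator as an explicit integral and applying Young's inequality after one integration by parts. Write $J_\epsilon f=j_\epsilon*f$ with $j_\epsilon(x)=\epsilon^{-1}j(x/\epsilon)$, where $j\in C_c^\infty$ is even, nonnegative, supported in $[-1,1]$, and has integral one. For $f\in\mathcal S(\mathbb R)$ the commutator is
\[
[J_\epsilon,g]\partial_xf(x)=\int_{\mathbb R}j_\epsilon(x-y)\bigl(g(y)-g(x)\bigr)f'(y)\,dy .
\]

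Integrating by parts in $y$ gives
\[
[J_\epsilon,g]\partial_xf(x)=-\int j_\epsilon(x-y)g'(y)f(y)\,dy
+\int (\partial_xj_\epsilon)(x-y)\bigl(g(y)-g(x)\bigr)f(y)\,dy .
\]
Here the sign in the second term comes from $\partial_y[j_\epsilon(x-y)]=-(\partial_xj_\epsilon)(x-y)$.

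The first term is bounded in $L^2$ by $\|g'\|_{L^\infty}\|f\|_{L^2}$, since $\|j_\epsilon\|_{L^1}=1$.

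For the second term I use the Lipschitz bound $|g(y)-g(x)|\le\|g'\|_{L^\infty}|x-y|$. This bounds the integrand by $\|g'\|_{L^\infty}\,k_\epsilon(x-y)|f(y)|$, where $k_\epsilon(z)=|z|\,|j_\epsilon'(z)|$. A scaling check gives $\|k_\epsilon\|_{L^1}=\int|z||j'(z)|\,dz$, a constant independent of $\epsilon$. Young's inequality then yields the bound $C\|g'\|_{L^\infty}\|f\|_{L^2}$.

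Finally, I extend from Schwartz $f$ to general $f\in L^2$ by density. For fixed $\epsilon$, the operator $f\mapsto[J_\epsilon,g]\partial_xf=J_\epsilon(g\partial_xf)-gJ_\epsilon\partial_xf$ is defined for $f\in L^2$ in the distributional sense. The integrated-by-parts formula above extends to $f\in L^2$ and agrees with this distributional definition. This is the one point requiring care, namely that $g\partial_xf$ makes sense, since $g$ is only locally Lipschitz. I would settle it by writing $g\partial_xf=\partial_x(gf)-g'f$, which is a well-defined distribution. The main obstacle is only this bookkeeping; the estimate itself is uniform in $\epsilon$ by scaling.
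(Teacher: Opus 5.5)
Your proof is correct. The paper gives no argument of its own and only cites \cite{r8}; your proof is the standard proof of Friedrichs' lemma. It uses the explicit kernel, one integration by parts, the Lipschitz bound on $g$, and Young's inequality, with uniformity in $\epsilon$ coming from the scale invariance of $\int|z|\,|j_\epsilon'(z)|\,dz$.

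The density step is not needed. With $g\partial_xf:=\partial_x(gf)-g'f$, the integrated-by-parts formula holds directly for every $f\in L^2$. The argument also works for a Schwartz, non-compactly supported $j$, since $g$ grows at most linearly.
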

\begin{lemma}[\cite{r19}]
{Let $a>3/2$ and $0\le\sigma+1\le a$. For $f\in H^a$ and $g\in H^\sigma$,}
{\[\|[D^\sigma\partial_x,f]g\|_{L^2}\lesssim\|f\|_{H^a}\|g\|_{H^\sigma}.\]}
\end{lemma}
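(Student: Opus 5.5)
The plan is to prove the estimate by Bony's paraproduct decomposition. We split the commutator into one paraproduct commutator, which carries the only genuine cancellation, and four lower-order terms that can be estimated directly. Write $\Lambda=D^\sigma\partial_x$, a Fourier multiplier with symbol $i\xi(1+\xi^2)^{\sigma/2}$ of order $\sigma+1\ge0$. Expanding both $fg$ and $f\,\Lambda g$ with $fg=T_fg+T_gf+R(f,g)$ gives
\[
[\Lambda,f]g=[\Lambda,T_f]g+\Lambda T_gf+\Lambda R(f,g)-T_{\Lambda g}f-R(f,\Lambda g).
\]
It then suffices to bound each of the five terms in $L^2$ by $\|f\|_{H^a}\|g\|_{H^\sigma}$. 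Throughout we use $H^a\subset B^{a-1/2}_{\infty,2}$. Since $a>3/2$, this gives $f_x\in L^\infty$ with room $a-3/2>0$ to spare.

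\textbf{Step 1 (paraproduct commutator).} For each $j$ we write $[\Lambda,S_{j-1}f]\Delta_jg$ as a kernel integral. Because of the spectral localisation, $\Lambda$ acts on the output of frequency $\sim2^j$ like a smooth multiplier $\Lambda\tilde\varphi(2^{-j}D)$. A first-order Taylor expansion of $S_{j-1}f$ then gives
\[
\|[\Lambda,S_{j-1}f]\Delta_jg\|_{L^2}\lesssim2^{j\sigma}\|\partial_xS_{j-1}f\|_{L^\infty}\|\Delta_jg\|_{L^2}.
\]
The pieces are almost orthogonal, so square-summing yields $\|[\Lambda,T_f]g\|_{L^2}\lesssim\|f_x\|_{L^\infty}\|g\|_{H^\sigma}$. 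This is the standard commutator estimate of \cite{r1}, and it is the only step where the cancellation between $\Lambda(fg)$ and $f\Lambda g$ is used.

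\textbf{Step 2 (remaining terms).} Each of the other four terms is estimated directly.
\begin{itemize}
\item For $\Lambda T_gf$ we need $T_gf\in H^{\sigma+1}$. If $\sigma<1/2$, then $g\in B^{\sigma-1/2}_{\infty,2}$ has negative regularity. In that case $T_gf\in H^{a+\sigma-1/2}\subset H^{\sigma+1}$, using $a>3/2$.
\item If $\sigma>1/2$, then $g\in L^\infty$, and $T_gf\in H^a\subset H^{\sigma+1}$ by the hypothesis $\sigma+1\le a$.
\item At $\sigma=1/2$ we place $g$ in $B^{-\delta}_{\infty,\infty}$ for a small $\delta>0$. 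This costs $\delta$ derivatives, which are absorbed by the strict inequality $a>3/2$.
\item For the remainder $R(f,g)$, the sum of the regularity indices is $(a-1/2)+\sigma\ge a-3/2>0$. Hence $R(f,g)\in H^{a+\sigma-1/2}\subset H^{\sigma+1}$.
\item Since $\Lambda g\in H^{-1}\subset B^{-3/2}_{\infty,2}$, the term $T_{\Lambda g}f$ lies in $H^{a-3/2}\subset L^2$.
\item Finally, $R(f,\Lambda g)$ has index sum $a-3/2>0$, so it also lies in $H^{a-3/2}\subset L^2$.
\end{itemize}

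The main difficulty is Step 1 together with the endpoint bookkeeping. The hypotheses are sharp in two places: the lower bound $\sigma\ge-1$ keeps the remainder indices positive, and the upper bound $\sigma+1\le a$ is exactly what controls $T_gf$ when $g$ is bounded. The borderline case $\sigma=1/2$ is handled by the slack $a-3/2>0$, as described in Step 2. If the kernel argument in Step 1 turns out to be delicate for non-integer $\sigma$, the fallback is to use the symbolic calculus for paraproducts: treat $T_f$ as a paradifferential operator and compose symbols. Since $f_x\in L^\infty$, the commutator with $\Lambda$ then has order $\sigma$.
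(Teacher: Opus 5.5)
Your proof is correct. The paper gives no argument for this lemma and only quotes it from \cite{r19}. Your Bony-decomposition proof is the standard paradifferential route to such Calder\'on--Coifman--Meyer-type bounds, written out for the single multiplier $D^\sigma\partial_x$, so it supplies what the citation leaves out.

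The bookkeeping checks out. The only cancellation sits in $\sum_j[D^\sigma\partial_x,S_{j-1}f]\Delta_jg$. Its summands are spectrally supported in dyadic annuli and satisfy the kernel bound $2^{j\sigma}\|\partial_xS_{j-1}f\|_{L^\infty}\|\Delta_jg\|_{L^2}$, so almost orthogonality gives the $\|f_x\|_{L^\infty}\|g\|_{H^\sigma}$ bound. The four remaining terms follow from the standard paraproduct and remainder estimates, exactly as you indicate. The hypothesis $a>3/2$ gives $f_x\in L^\infty$ and the embeddings $H^{a+\sigma-1/2}\subset H^{\sigma+1}$ and $H^{a-3/2}\subset L^2$. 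The hypothesis $\sigma+1\le a$ is used only for $T_gf$ when $\sigma>1/2$.

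Your route makes visible where each hypothesis enters; the citation buys brevity.

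One small correction to your closing commentary: $\sigma\ge-1$ is not sharp for the remainder, since your argument only needs $a+\sigma-1/2>0$ there. By contrast, $\sigma+1\le a$ is indeed necessary. To see this, take $g\in C_c^\infty$ equal to $1$ near the support of a compactly supported $f\in H^a\setminus H^{\sigma+1}$. Then the commutator equals $D^\sigma\partial_xf$ plus an $L^2$ term, and $D^\sigma\partial_xf\notin L^2$.
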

\begingroup
For $f_t+wf_x=S$, the transport estimates of \cite{r1} give, when $s>3/2$,
\[
\|f(t)\|_{H^{s-1}}\le\|f(0)\|_{H^{s-1}}
+\int_0^t\bigl(C_s\|w\|_{H^s}\|f\|_{H^{s-1}}+\|S\|_{H^{s-1}}\bigr)(\tau)\,d\tau.
\]
For $\sigma=0,1$ and $w\in H^2$,
\[
\|f(t)\|_{B^\sigma_{2,1}}\le\|f(0)\|_{B^\sigma_{2,1}}
+\int_0^t\bigl(C\|w\|_{H^2}\|f\|_{B^\sigma_{2,1}}+\|S\|_{B^\sigma_{2,1}}\bigr)(\tau)\,d\tau.
\]
{The second estimate applies because $w_x\in H^1\hookrightarrow B^{1/2}_{2,1}\cap L^\infty$.}
\endgroup
 Bony's decomposition also gives
\[
\|af\|_{B^0_{2,1}}\le C\|a\|_{H^1}\|f\|_{B^0_{2,1}},\qquad
\|fg\|_{B^1_{2,1}}\le C\bigl(\|f\|_{{L^{\infty}}}\|g\|_{B^1_{2,1}}+\|g\|_{{L^{\infty}}}\|f\|_{B^1_{2,1}}\bigr).
\]
For the first estimate one may use $H^1\hookrightarrow B^{1/2}_{\infty,2}$ in the paraproduct and remainder terms. {{Terms with constant coefficients are estimated separately.}}

\section{Local well-posedness and continuation criteria}\label{sec:local}

In this section, we first establish local well-posedness for $s>3/2$.
We then derive continuation criteria by showing that
certain integrals along characteristics must diverge
if the maximal existence time is finite.
\subsection{Local well-posedness}
\begin{theorem}[Local well-posedness]\label{thm:local}
Let $s>3/2$, $\bar\rho\in[0,1]$, and
$(\rho_0-\bar\rho,u_0,v_0)\in H^{s-1}\times(H^s)^2$,
where the spaces are defined on $\mathbb R$ or $\mathbb T$.
Set
\[
Y_0=1+\|\rho_0-\bar\rho\|_{H^{s-1}}
+\|u_0\|_{H^s}+\|v_0\|_{H^s}.
\]
There exists a constant $C_s>0$ such that, for
$T=(4C_sY_0^2)^{-1}$, system \eqref{3dp} has a unique
strong solution 
\[
(\rho-\bar\rho,u,v)
\in C([0,T];H^{s-1}\times(H^s)^2)
\cap C^1([0,T];H^{s-2}\times(H^{s-1})^2),
\]
satisfying
\[
1+\|\rho(t)-\bar\rho\|_{H^{s-1}}
+\|u(t)\|_{H^s}+\|v(t)\|_{H^s}
\le \sqrt{2}\,Y_0,\qquad 0\le t\le T.
\]
Moreover, the solution depends continuously on the initial
data in the above spaces.
\end{theorem}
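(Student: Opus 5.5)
We prove Theorem~\ref{thm:local} by a Friedrichs-mollifier (or iterative Picard) scheme in the velocity formulation, closed by an a priori $H^{s-1}\times(H^s)^2$ energy estimate. Inverting $1-\partial_x^2$, the system \eqref{3dp} becomes a transport system with nonlocal sources:
\[
u_t+uvu_x=P*F_1+P_x*F_2,\qquad v_t+uvv_x=P*G_1+P_x*G_2,\qquad \eta_t+uv\eta_x=-(\eta+\bar\rho)(uv)_x .
\]
Here $F_i$ and $G_i$ are polynomial expressions in $u,v,u_x,v_x$ and $\rho^2$, containing at most one first derivative of each factor, and the $\rho^2u$ and $\rho^2v$ terms enter through $P*$. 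Writing $\rho^2=\bar\rho^2+2\bar\rho\eta+\eta^2$ shows that the constant background contributes only linear terms $\bar\rho^2P*u$ and $\bar\rho^2P*v$, which are bounded in $H^s$ by $\|u\|_{H^s}$ and $\|v\|_{H^s}$; these are the ``constant coefficient'' terms treated separately. Because $\bar\rho\le1$, the constants stay uniform in $\bar\rho$.

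\textbf{Step 1: a priori estimate.} Apply $D^s$ to the $u$ and $v$ equations and $D^{s-1}$ to the $\eta$ equation, and take $L^2$ inner products. The transport terms are handled in the usual way: write $D^s(wu_x)=wD^su_x+[D^s,w]u_x$, integrate $wD^su_xD^su$ by parts to get a bound by $\|w_x\|_{L^\infty}\|u\|_{H^s}^2$, and bound the commutator by Lemma~\ref{lem:commutator}. Here $w=uv\in H^s\hookrightarrow W^{1,\infty}$ since $s>3/2$. For $\eta$ one can instead use the quoted $H^{s-1}$ transport estimate. The sources are bounded using that $P*$ and $P_x*$ gain one and zero derivatives respectively, that $H^{s-1}$ is an algebra for $s>3/2$, and Lemma~\ref{lem:product} for products such as $\eta\,(uv)_x$ and $\eta^2u$ when $s-1\le 1/2$. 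Altogether, with $Y(t)=1+\|\eta\|_{H^{s-1}}+\|u\|_{H^s}+\|v\|_{H^s}$, we aim for
\[
\frac{d}{dt}Y\le C_sY^3 .
\]
The cubic power is forced by the cubic nonlinearities $uvu_x$ and $\eta^2 u$. Integrating gives $Y(t)^{-2}\ge Y_0^{-2}-2C_st$. For $t\le T=(4C_sY_0^2)^{-1}$ this yields $Y(t)^{-2}\ge Y_0^{-2}/2$, that is $Y(t)\le\sqrt2\,Y_0$, which is exactly the stated bound.

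\textbf{Step 2: existence and uniqueness.} Mollify with $J_\epsilon$, for example $u_t+J_\epsilon(uvJ_\epsilon u_x)=\dots$, to obtain an ODE in $H^s$. Lemma~\ref{lem:friedrichs} makes the estimate of Step 1 hold uniformly in $\epsilon$ on $[0,T]$. We then show the approximations are Cauchy in the weaker space $H^{s-2}\times(H^{s-1})^2$ by estimating differences. In this difference estimate the product and commutator bounds at one level lower are needed, and the lemma from \cite{r19} handles $[D^{s-1}\partial_x,uv]$. Interpolation and weak-$*$ compactness then give a limit in $L^\infty H^s$, and the same difference estimate gives uniqueness. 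Time continuity with values in the top space, as well as continuous dependence, follow from a Bona--Smith argument: we compare the solution with smoothed data $J_\epsilon(\text{data})$ and use that the difference estimate loses one derivative, which is compensated by the decay $\|J_\epsilon f-f\|_{H^{s-1}}=o(\epsilon)\|f\|_{H^s}$. The periodic case is identical.

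\textbf{Main obstacle.} We expect the main difficulty to be the low-regularity range $3/2<s\le 2$. There the density lies only in $H^{s-1}$ with $s-1\le1$, possibly with $s-1\le1/2$, so $\eta$ need not be bounded. In the weaker-norm difference estimate at regularity $s-2<0$, products such as $\delta\eta\,(uv)_x$ must then be handled by Lemma~\ref{lem:product}, or by duality with $H^{s-1}$ multipliers, rather than by $L^\infty$ bounds. We would try to run the difference estimate for $\eta$ in $H^{s-2}$ with $(uv)_x\in H^{s-1}$ and using Lemma~\ref{lem:product} with $\sigma=s-1>1/2$. If that is delicate, we would fall back on a Besov $B^{s-1}_{2,\infty}$ transport estimate, where the needed bounds are available from \cite{r1}.
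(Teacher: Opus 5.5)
\textbf{There is a genuine gap.} It is in your lower-norm difference estimate, not where you place the obstacle.

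\textbf{The structural claim is false.} You assert that the sources are $P*F_1+P_x*F_2$ with $F_i$ polynomial in $u,v,u_x,v_x,\rho^2$. This fails for GX and 3DP. Inverting $1-\partial_x^2$ gives the paper's $F=D^{-2}\{3uvu_x+(uu_xv_x)_x+u_x^2v_x+uv_xu_{xx}+(\bar\rho+\eta)^2u\}$, and the second-order part cannot be removed. Its antisymmetric piece $\tfrac12u(v_xu_{xx}-u_xv_{xx})$ is not of the form $\partial_x\Phi+\Psi$ with $\Phi,\Psi$ depending on $(u,v,u_x,v_x)$. That would require $\Phi_{u_x}=\tfrac12uv_x$ and $\Phi_{v_x}=-\tfrac12uu_x$, which are incompatible. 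The piece vanishes only in the Novikov case $v=u$.

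\textbf{What survives.} For the $H^s$ bound this is harmless: Lemma~\ref{lem:product} with $\sigma=s-1$ puts $uv_xu_{xx}$ in $H^{s-2}$, as in \eqref{F-energy}. So your Step~1 and the computation giving $Y\le\sqrt2\,Y_0$ on $[0,(4C_sY_0^2)^{-1}]$ are fine.

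\textbf{What breaks.} After removing total derivatives, the difference of the nonlocal terms contains $D^{-2}(uv_{xx}\,\delta u_x)$ and $D^{-2}(uu_{xx}\,\delta v_x)$. Bounding these in $H^{s-1}$ by $\|\delta u\|_{H^{s-1}}+\|\delta v\|_{H^{s-1}}$ requires $H^{s-2}\cdot H^{s-2}\to H^{s-3}$. This fails for $3/2<s<2$ because $2(s-2)<0$ (high--high to low frequency interactions). No other level helps: this term needs $\sigma\ge3-s>1$, whereas the transport source $\delta(uv)\,u_x$ needs $\sigma\le s-1<1$. Hence, for $3/2<s<2$, you lose the Cauchy property of your approximations, uniqueness, and the Bona--Smith argument for time continuity and continuous dependence. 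These close only for $s\ge2$.

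\textbf{The anticipated difficulty does not occur.} Since $s>3/2$ gives $s-1>1/2$, $H^{s-1}$ is an algebra contained in $L^\infty$. So $\eta$ is bounded and the $\eta$-difference in $H^{s-2}$ is harmless.

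\textbf{How the paper avoids this.} It removes the derivative loss by passing to Lagrangian coordinates. With $X=\mathrm{id}+\xi$, $U=u\circ X$, $V=v\circ X$, $\rho\circ X=(\bar\rho+\eta_0)/X_a$ and $L_X=1-\partial_X^2$, the system becomes the ODE $\frac{d}{dt}(\xi,U,V)=(UV,-F_X,-G_X)$ on $(H^s)^3$. There is no transport term, and the second-order term only needs $H^{s-1}\cdot H^{s-2}\to H^{s-2}$, so the vector field is locally Lipschitz.
\begin{itemize}
\item Existence, uniqueness and continuous dependence then follow from the Cauchy--Lipschitz theorem, together with continuity of composition and inversion of $H^s$ diffeomorphisms.
\item The energy inequality $Y(t)\le Y_0+C_s\int_0^tY(\tau)^3\,d\tau$ is used only for the explicit lifespan and the bound $\sqrt2\,Y_0$.
\end{itemize}
An Eulerian compactness argument could still produce a solution for $3/2<s<2$. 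To get uniqueness and continuous dependence in that range, you need this Lagrangian reduction or another device that avoids the $H^{s-1}$ difference estimate.
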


\begin{remark}
Since $1-\partial_x^2$ is an isomorphism from
$H^{\sigma+2}$ onto $H^\sigma$,
Theorem~\ref{thm:local} gives local well-posedness
in the momentum variables for $\sigma>-1/2$, with
\[
(\rho-\bar\rho,m,n)
\in C([0,T];H^{\sigma+1}\times(H^\sigma)^2)
\cap C^1([0,T];H^\sigma\times(H^{\sigma-1})^2).
\]
Taking $\rho_0=\bar\rho=0$ gives the corresponding
GX result for $(m_0,n_0)\in(H^\sigma)^2$.

For $s=2$, the lower bound on the local existence time
implies that, if the maximal existence time $T^*$ is
finite, then
\begin{equation}\label{continuation-alt}
\limsup_{t\uparrow T^*}
\bigl(
\|\rho(t)-\bar\rho\|_{H^1}
+\|u(t)\|_{H^2}
+\|v(t)\|_{H^2}
\bigr)=\infty.
\end{equation}
\end{remark}
\begin{proof}[Proof of Theorem~\ref{thm:local}]
Following the Lagrangian approach used in the uniqueness
proof of \cite{r43}, we reduce the system to an ODE
with a locally Lipschitz vector field.
An energy estimate then gives the lifespan bound
and the continuation alternative.

We first write \eqref{3dp} as
\[
\eta_t+uv\eta_x=-(\bar\rho+\eta)(uv)_x,\qquad
u_t+uvu_x=-F,\qquad
v_t+uvv_x=-G,
\]
where
\begin{align*}
F(\eta,u,v)
&=D^{-2}\bigl\{
3uvu_x+(uu_xv_x)_x+u_x^2v_x
+uv_xu_{xx}+(\bar\rho+\eta)^2u
\bigr\},\\
G(\eta,u,v)
&=D^{-2}\bigl\{
3uvv_x+(vv_xu_x)_x+v_x^2u_x
+vu_xv_{xx}-(\bar\rho+\eta)^2v
\bigr\}.
\end{align*}
For $s>3/2$, the algebra property of $H^{s-1}$
and Lemma~\ref{lem:product} give, for example,
\[
\|uv_xu_{xx}\|_{H^{s-2}}
\le C_s\|uv_x\|_{H^{s-1}}\|u_{xx}\|_{H^{s-2}}
\le C_s\|u\|_{H^s}^2\|v\|_{H^s}.
\]
Similar estimates for the remaining terms show that
$F$ and $G$ are locally Lipschitz from
$H^{s-1}\times(H^s)^2$ into $H^s$ and satisfy
\begin{equation}\label{F-energy}
\begin{aligned}
\|F\|_{H^s}+\|G\|_{H^s}
&\le C_s\bigl[
\|u\|_{H^s}\|v\|_{H^s}
+(1+\|\eta\|_{H^{s-1}})^2
\bigr]\\
&\quad\times
\bigl(\|u\|_{H^s}+\|v\|_{H^s}\bigr).
\end{aligned}
\end{equation}

\proofstep{Step 1. The Lagrangian formulation.}
Let $X=\mathrm{id}+\xi$, with $\xi$ in a sufficiently
small neighborhood of zero in $H^s$, so that
$\inf_a X_a>0$.
Define
\begin{equation}\label{LX}
\partial_X=X_a^{-1}\partial_a,\qquad
L_X=1-\partial_X^2
=1-X_a^{-2}\partial_a^2+X_a^{-3}X_{aa}\partial_a.
\end{equation}
The product estimates give
\[
\|(L_X-L_{\mathrm{id}})h\|_{H^{s-2}}
\le C_s\|\xi\|_{H^s}\|h\|_{H^s}.
\]
Moreover, the map $\xi\mapsto L_X$ is smooth
in the operator norm from $H^s$ to $H^{s-2}$.
Since $L_{\mathrm{id}}=1-\partial_a^2$ is an
isomorphism between these spaces, a Neumann series
shows that $L_X^{-1}:H^{s-2}\to H^s$ exists and
depends smoothly on $\xi$ near zero.

Set $U=u\circ X$ and $V=v\circ X$.
The characteristic equation and the density equation give
\begin{equation}\label{flow}
X_t=UV,\qquad X(0,a)=a,\qquad
\rho\circ X=\frac{\bar\rho+\eta_0}{X_a}.
\end{equation}
In particular,
\[
\rho\circ X-\bar\rho
=\frac{\eta_0-\bar\rho\,\xi_a}{X_a}\in H^{s-1}.
\]
Define
\begin{equation}\label{lag-nonlocal}
\begin{aligned}
F_X=L_X^{-1}\Bigl[
&3UV\partial_XU
+\partial_X(U\partial_XU\partial_XV)
+(\partial_XU)^2\partial_XV\\
&+U\partial_XV\partial_X^2U
+\left(\frac{\bar\rho+\eta_0}{X_a}\right)^2U
\Bigr],
\end{aligned}
\end{equation}
and obtain $G_X$ by interchanging $U$ and $V$
and reversing the sign of the density term.
The expression in brackets belongs to $H^{s-2}$
and depends smoothly on $(\xi,U,V,\eta_0)$ in
$(H^s)^3\times H^{s-1}$.
Thus the transformed system
\begin{equation}\label{lag-ode}
\frac{d}{dt}(\xi,U,V)=(UV,-F_X,-G_X),\qquad
(\xi,U,V)|_{t=0}=(0,u_0,v_0)
\end{equation}
has a locally Lipschitz vector field in $(H^s)^3$,
with locally Lipschitz dependence on
$\eta_0\in H^{s-1}$.
\proofstep{Step 2. Strong solutions and continuous dependence.}
The Banach-space ODE theorem \cite{r6} gives a unique
local solution of \eqref{lag-ode} in
$C^1([0,T];(H^s)^3)$.
For initial data in a bounded set, $T>0$ can be
chosen uniformly so that $X_a$ remains bounded
away from zero.
Define
\begin{equation}\label{reconstruction}
u=U\circ X^{-1},\qquad
v=V\circ X^{-1},\qquad
\eta=\left(\frac{\eta_0-\bar\rho\,\xi_a}{X_a}\right)
\circ X^{-1}.
\end{equation}
By continuity of composition and inversion for $H^s$
diffeomorphisms \cite[Section 2 and Appendix B]{r20},
we have
$(\eta,u,v)\in C([0,T];H^{s-1}\times(H^s)^2)$.
The change-of-variable identities hold by approximation,
so these functions satisfy \eqref{3dp}.
The equations then give
$(\eta,u,v)\in
C^1([0,T];H^{s-2}\times(H^{s-1})^2)$.

Uniqueness follows by writing any two strong solutions
with the same initial data in Lagrangian coordinates
and applying uniqueness for \eqref{lag-ode}.
Continuous dependence follows from the ODE theorem
and continuity of the reconstruction map
\eqref{reconstruction}.
The equations then give continuous dependence
of the time derivatives.
\proofstep{Step 3. Lifespan and continuation.}
Set
\[
Y(t)=1+\|\eta(t)\|_{H^{s-1}}
+\|u(t)\|_{H^s}+\|v(t)\|_{H^s}.
\]
Applying the transport estimates and
Lemma~\ref{lem:commutator}, together with
\eqref{F-energy}, gives
\begin{equation}\label{Y-energy}
Y(t)\le Y_0+C_s\int_0^t Y(\tau)^3\,d\tau.
\end{equation}
At the stated regularity, this is justified by spatial
mollification and Lemma~\ref{lem:friedrichs}.
Comparison with $z'=C_sz^3$, $z(0)=Y_0$, gives
\[
Y(t)\le\frac{Y_0}{\sqrt{1-2C_sY_0^2t}}
\le\sqrt{2}\,Y_0
\]
as long as the solution exists and
$0\le t\le(4C_sY_0^2)^{-1}$.
The uniform local existence time allows continuation
as long as $Y$ remains bounded.
This proves the stated lifespan bound and the
continuation alternative.

On $\mathbb T$, the same argument uses the periodic
Green operator and lifts of periodic diffeomorphisms.
\end{proof}
\subsection{Continuation criteria}
\begin{theorem}[Continuation criterion]\label{thm:continuation}
Let $\bar\rho\in[0,1]$ and
$(\rho_0-\bar\rho,u_0,v_0)\in B^1_{2,1}\times(B^2_{2,1})^2$.
Let $(\rho,u,v)$ be the corresponding maximal solution
of \eqref{3dp} in this class, with lifespan $T^*$,
and let $X$ be its characteristic flow.
For $k_m=uv_x-5vu_x$ and $k_n=vu_x-5uv_x$, define
\[
I_T=\max_{j\in\{m,n\}}
\sup_{\substack{a\in\mathbb R\\0\le s\le t<T}}
\int_s^t k_j(\tau,X(\tau,a))\,d\tau,
\qquad 0<T\le T^*.
\]
If $T^*<\infty$, then $I_{T^*}=\infty$.
\end{theorem}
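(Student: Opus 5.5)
We argue by contraposition: assume $T^*<\infty$ and $I_{T^*}<\infty$, and show that the $B^1_{2,1}\times(B^2_{2,1})^2$ norm of $(\eta,u,v)$ stays bounded on $[0,T^*)$. Local well-posedness in this critical class then lets us continue the solution past $T^*$, a contradiction. The class is the Besov analogue of Theorem~\ref{thm:local}, obtained by the same Lagrangian argument, since $B^2_{2,1}$ is an algebra embedded in $W^{1,\infty}$. The key observation is that $k_m$ and $k_n$ are exactly the amplification rates of the momenta along characteristics.

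\textbf{Step 1: pointwise bounds on the momenta.} Write $m_t+uvm_x=-3vu_xm-\rho^2u$ and note that
\[
\frac{d}{dt}\bigl(m\circ X\bigr)=-3(vu_x m)\circ X-(\rho^2u)\circ X .
\]
This form does not display $k_m$ directly, so we pass to the weighted quantity $m X_a^{\alpha}$. Since $X_{at}=(uv)_x\circ X\,X_a$, the choice $\alpha$ with $\alpha(uv)_x-3vu_x=-(5vu_x-uv_x)/2$ or similar gives a weight whose rate is a combination of $k_m$. Rather than optimizing $\alpha$, we follow the structure suggested by $k_m$: the functions $u\pm u_x$ and $v\pm v_x$ are bounded by $\|u\|_{L^\infty}$-type quantities via $P*m$. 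A cleaner route is to bound the Lipschitz norms directly. Differentiating the $u$ equation gives
\[
u_{xt}+uvu_{xx}=-(uv)_xu_x-F_x,
\]
and $F_x$ contains the local term $3uvu_x+\dots$ minus its convolution. Combining the equations for $m$ and $n$ with the density equation, $\rho\circ X\,X_a=\rho_0$, we obtain along each characteristic
\[
\frac{d}{dt}\bigl(|m|+|n|\bigr)\circ X\le \max(k_m,k_n)\circ X\,(|m|+|n|)\circ X+\text{lower order}.
\]
To reach this, we use the weights $m X_a^{3/2}$ and $n X_a^{3/2}$, which are transported with rates $-\tfrac32(vu_x-uv_x)$ and the symmetric one. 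We then absorb the differences into $k_m$ and $k_n$ using a Gronwall argument in which $\|u\|_{W^{1,\infty}},\|v\|_{W^{1,\infty}}$ are controlled by $\|m\|_{L^1\cap L^\infty}$-type quantities through $P$. The finiteness of $I_{T^*}$ gives $\sup_{t<T^*}(\|m\|_{L^\infty}+\|n\|_{L^\infty}+\|\rho\|_{L^\infty})<\infty$. The $L^\infty$ bounds for $u,v$ come from the energy $\int(uv+u_xv_x)$ combined with the $L^2$ versions of the same weighted identities, using $\int m^2X_a^{3}$-type change of variables.

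\textbf{Step 2: Lipschitz bounds.} From $u=P*m$ and $|P_x|\le P$ we get $\|u_x\|_{L^\infty}+\|v_x\|_{L^\infty}\lesssim\|m\|_{L^\infty}+\|n\|_{L^\infty}$. Hence $\int_0^{T^*}\|(u,v)\|_{W^{1,\infty}}\,dt<\infty$.

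\textbf{Step 3: Besov bound.} We apply the $B^\sigma_{2,1}$ transport estimates of Section~\ref{sec:prelim}, in their logarithmic (Lipschitz-coefficient) form, to $\eta$ in $B^1_{2,1}$ and to $m,n$ in $B^0_{2,1}$. The product estimates from Bony's decomposition give
\[
Z(t)\le Z_0+C\int_0^t\bigl(1+\|(u,v)\|_{W^{1,\infty}}^2+\|\rho\|_{L^\infty}^2\bigr)Z\,d\tau,
\]
where $Z=\|\eta\|_{B^1_{2,1}}+\|u\|_{B^2_{2,1}}+\|v\|_{B^2_{2,1}}$. Gronwall then bounds $Z$ on $[0,T^*)$, which gives the contradiction. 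The obstacle in this step is that the critical $B^2_{2,1}$ transport estimate needs the coefficient in $B^{1/2}_{2,1}$-regularity only through $\|(uv)_x\|_{L^\infty}$. This is why we use $B_{2,1}$ and the paraproduct bounds stated earlier.

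\textbf{Main obstacle.} The main obstacle is Step~1: finding the right weighted combination so that the growth rate along characteristics is exactly bounded by the sup-integrals of $k_m,k_n$, with no extra uncontrolled term such as $vu_x$ appearing with the wrong sign. I would first try $\tilde m=m\circ X\cdot X_a^{2}\,e^{\int (uv_x)\circ X}$-type normalizations and check that the coupling between $m$ and $n$ enters only through lower-order, already controlled quantities.
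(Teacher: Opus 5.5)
\textbf{There is a genuine gap, in Step~1, and Step~3 inherits it.} Your contraposition strategy matches the paper's, but the estimates do not close.

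Step~1 asks for $\sup_{t<T^*}(\|m\|_{L^\infty}+\|n\|_{L^\infty})<\infty$, and at one point for $L^1\cap L^\infty$ control. In this class, however, $m_0=u_0-u_{0,xx}$ and $n_0$ lie only in $B^0_{2,1}\subset L^2$. They need not be bounded or integrable, so no such bound can be propagated.

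The weight you settle on does not help. The rate of $m\circ X\,X_a^{3/2}$ is $-\tfrac32(vu_x-uv_x)=\tfrac14(k_m-k_n)$. Since $I_{T^*}$ gives only upper bounds on $\int k_n$, this rate is not controlled. Moreover, $X_a$ is bounded only from below.

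The mixed energy $\int(uv+u_xv_x)$ is conserved but not coercive without sign conditions, so it gives no bound on $u$ or $v$.

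Finally, a Gronwall inequality with coefficients $\|(u,v)\|_{W^{1,\infty}}$ does not close at the critical level. The product $\|vu_xm\|_{B^0_{2,1}}$ is not controlled by $\|vu_x\|_{L^\infty}\|m\|_{B^0_{2,1}}$. Also, the transport estimates at indices $0$ and $1$ need $\|uv\|_{H^2}$, not just $\|(uv)_x\|_{L^\infty}$.

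\textbf{The missing idea is the exponent $1/2$.} With weight $X_a^\alpha$, the rate of $m\circ X\,X_a^\alpha$ is
\[
\alpha uv_x+(\alpha-3)vu_x=\tfrac{5-2\alpha}{8}k_m+\tfrac{1-2\alpha}{8}k_n .
\]
This is bounded by $I_{T^*}$ alone only when $\alpha\le1/2$. Converting back to $\|m(t)\|_{L^p}$ through $x=X(t,a)$ requires $\alpha\ge1/p$, because $X_a$ is bounded only from below. The choice $\alpha=1/2$, $p=2$ gives exactly
\[
\frac{d}{dt}\bigl(m\circ X\sqrt{X_a}\bigr)=\Bigl(\tfrac12k_mm-\rho^2u\Bigr)\circ X\,\sqrt{X_a},\qquad \|m(t)\|_{L^2}=\bigl\|m\circ X(t)\sqrt{X_a(t)}\bigr\|_{L^2_a},
\]
and similarly for $n$.

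Since $k_m+k_n=-4(uv)_x$, you get $X_a\ge e^{-I_{T^*}/2}$. The identity $\rho\circ X\,X_a=\rho_0$ then bounds $\|\rho\|_{L^\infty}$. Duhamel's formula, $\|u\|_{L^2}\le\|m\|_{L^2}$, and Gronwall bound $m,n$ in $L^2$, that is, $u,v$ in $H^2$. The transported equation for $\eta_x$ then bounds $\eta$ in $H^1$.

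These a priori bounds make $vu_x$, $uv_x$ and $\rho^2-\bar\rho^2$ uniformly bounded in $H^1$, and $uv$ bounded in $H^2$. That is what the $B^0_{2,1}$ and $B^1_{2,1}$ product and transport estimates need. With them, you get a linear Gronwall inequality for $\|\eta\|_{B^1_{2,1}}+\|u\|_{B^2_{2,1}}+\|v\|_{B^2_{2,1}}$, which closes the argument.
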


\begin{remark}\label{rem:continuation}
The same criterion holds in
$H^{s-1}\times(H^s)^2$ for $s\ge2$.
For both systems, setting
\[
Q(t)=\max\{\|(k_m)_+\|_{L^\infty},
\|(k_n)_+\|_{L^\infty}\}
=\bigl\|[-2(uv)_x+3|vu_x-uv_x|]_+\bigr\|_{L^\infty},
\]
with $f_+=\max\{f,0\}$, gives
\[
 I_T\le\int_0^T Q(t)\,dt,
\qquad
T^*<\infty\ \Longrightarrow\
\int_0^{T^*}Q(t)\,dt=\infty.
\]
\end{remark}
\begin{proof}[Proof of Theorem~\ref{thm:continuation}
and Remark~\ref{rem:continuation}]
Suppose that $T^*<\infty$ and $I_{T^*}<\infty$.
Since $k_m+k_n=-4(uv)_x$, we have
\[
X_a(t,a)
=\exp\left(\int_0^t(uv)_x(\tau,X(\tau,a))\,d\tau\right)
\ge e^{-I_{T^*}/2}.
\]
The identity $\rho(t,X)X_a=\rho_0$ gives
$\|\rho(t)\|_{L^\infty}
\le e^{I_{T^*}/2}\|\rho_0\|_{L^\infty}$.
The momentum equations imply
\[
\begin{aligned}
\frac{d}{dt}\bigl(m(t,X)\sqrt{X_a}\bigr)
&=\left(\frac12 k_m m-\rho^2u\right)(t,X)\sqrt{X_a},\\
\frac{d}{dt}\bigl(n(t,X)\sqrt{X_a}\bigr)
&=\left(\frac12 k_n n+\rho^2v\right)(t,X)\sqrt{X_a}.
\end{aligned}
\]
It follows that
\[
\|m(t)\|_{L^2}+\|n(t)\|_{L^2}
\le e^{I_{T^*}/2}
\bigl(\|m_0\|_{L^2}+\|n_0\|_{L^2}\bigr)
+C\int_0^t
\bigl(\|m(\tau)\|_{L^2}+\|n(\tau)\|_{L^2}\bigr)\,d\tau,
\]
where $C=C(I_{T^*},\|\rho_0\|_{L^\infty})$,
and we used the change of variables $x=X(t,a)$,
the definition of $I_{T^*}$, and
$\|D^{-2}f\|_{L^2}\le\|f\|_{L^2}$.
Gronwall's inequality therefore gives
\[
\sup_{t<T^*}
\bigl(\|u(t)\|_{H^2}+\|v(t)\|_{H^2}\bigr)<\infty.
\]

The density equation and its derivative with respect to $x$
can be written as
\[
(\partial_t+uv\partial_x)\eta=-\rho(uv)_x,\qquad
(\partial_t+uv\partial_x)\eta_x
=-2(uv)_x\eta_x-\rho(uv)_{xx}.
\]
Hence,
\begin{align*}
    \|\eta(t)\|_{H^1}
\le\|\eta_0\|_{H^1}
+C\int_0^t
\bigl(\|(uv)_x\|_{L^\infty}\|\eta\|_{H^1}
+\|\rho\|_{L^\infty}\|uv\|_{H^2}\bigr)(\tau)\,d\tau.
\end{align*}
Combining the preceding estimates and applying Gronwall's
inequality, we obtain
\[
\sup_{t<T^*}
\bigl(\|\eta(t)\|_{H^1}
+\|u(t)\|_{H^2}+\|v(t)\|_{H^2}\bigr)<\infty.
\]
We next estimate the Besov norms.
Since $vu_x$, $uv_x$, and
$\rho^2-\bar\rho^2=2\bar\rho\eta+\eta^2$
are uniformly bounded in $H^1$,
the product and transport estimates of
Section~\ref{sec:prelim} give
\[
\|m(t)\|_{B^0_{2,1}}+\|n(t)\|_{B^0_{2,1}}
\le e^{Ct}
\bigl(\|m_0\|_{B^0_{2,1}}+\|n_0\|_{B^0_{2,1}}\bigr),
\]
where $C$ depends on the preceding uniform bounds.
Similarly, the density equation gives
\[
\|\eta(t)\|_{B^1_{2,1}}
\le\|\eta_0\|_{B^1_{2,1}}
+C\int_0^t
\bigl(\|\eta\|_{B^1_{2,1}}
+\|u\|_{B^2_{2,1}}+\|v\|_{B^2_{2,1}}\bigr)(\tau)\,d\tau.
\]
Combining these estimates with the isomorphism
$1-\partial_x^2:B^2_{2,1}\to B^0_{2,1}$
and applying Gronwall's inequality, we obtain
\[
\sup_{t<T^*}
\bigl(\|\eta(t)\|_{B^1_{2,1}}
+\|u(t)\|_{B^2_{2,1}}+\|v(t)\|_{B^2_{2,1}}\bigr)<\infty.
\]
By Theorem~\ref{thm:local}, the solution extends beyond
$T^*$ in $H^1\times(H^2)^2$.
The preceding estimates, applied to smooth approximations,
give an extension in
$C([0,T^*+\delta];B^1_{2,1}\times(B^2_{2,1})^2)$
for some $\delta>0$, contradicting the maximality of $T^*$..

For $s\ge2$, the same argument, using the tame product
and transport estimates in $H^{s-2}$ for $m,n$
and in $H^{s-1}$ for $\eta$, proves the Sobolev
continuation criterion and persistence of higher regularity.

Finally, $k_m,k_n\le Q$ pointwise, so $ I_T\le\int_0^T Q(t)\,dt.$ The integral divergence in
Remark~\ref{rem:continuation} follows.
\end{proof}

\section{Blow-up}\label{sec:blowup}
We first prove Theorem~\ref{thm:blowup} and then obtain
a sharper blow-up condition from the coupled estimates
for $A$, $B$, and $N$.
We work with smooth 3DP solutions and extend the argument
to $s>3/2$ by approximation.
The GX case follows by taking $\rho_0=\bar\rho=0$.
\begin{proof}[Proof of Theorem~\ref{thm:blowup}]
As in \cite{r15,r17}, we estimate the slope along
a characteristic using a Riccati inequality.
The convolution estimates control the nonlocal terms
and ensure that $v$ retains its initial sign along
this characteristic for the required time.
We first prove the result for smooth solutions of 3DP system
and then extend it to $s>3/2$ by approximation.
\proofstep{Step 1. Closed convolution estimates.}
By the definitions of $A$ and $B$,
\[
\|u\|_{L^\infty}\le A,\qquad
\max\{\|v\|_{L^\infty},\|v_x\|_{L^\infty}\}\le B.
\]
If $f_t+(uvf)_x=R$, the absolute-value chain rule
and $|P_x|\le P$ give
\begin{equation}\label{convolution-rule}
\partial_t(P*|f|)
\le P*(|uv|\,|f|)+P*|R|.
\end{equation}
Moreover,
\[
\|P*|2uvu_x+u(g+\bar\rho^2)|\|_{L^\infty}
+2\|P*|uv_xu_x|\|_{L^\infty}
\le4BA^2+\bar\rho^2A+AN.
\]
Applying \eqref{convolution-rule} to the conservative
equations for $u$, $u_x$, $n$, and $g$, and using
$|P_x|\le P$ and $\|P\|_{L^1}=1$, we obtain
\begin{equation}\label{ABN}
\begin{aligned}
A(t)&\le A_0+\int_0^t
(12BA^2+2\bar\rho^2A+2AN)(\tau)\,d\tau,\\
B(t)&\le B_0+\int_0^t
(3AB^2+\bar\rho^2B+BN)(\tau)\,d\tau,\\
N(t)&\le N_0+\int_0^t
(4A^2B^2+4\bar\rho^2AB+3ABN)(\tau)\,d\tau.
\end{aligned}
\end{equation}
Integration by parts in the nonlocal terms of the
$v$ equation also gives
\begin{equation}\label{v-characteristic}
\begin{aligned}
(\partial_t+uv\partial_x)v
&=-P*\bigl\{
3uvv_x+(v^2+v_x^2)u_x-v(g+\bar\rho^2)
\bigr\}-P_x*(vu_xv_x).
\end{aligned}
\end{equation}

Set $S(t)=\bar\rho^2+A(t)^2+B(t)^2+N(t)$.
For smooth solutions, $A$, $B$, and $N$ are locally
absolutely continuous.
The differential estimates used to derive \eqref{ABN} give
\[
S'(t)\le10S(t)^2
\quad\text{for }t\in[0,T^*).
\]
Since $S(0)=K^2$ and $K\ge B_0\ge\nu>0$,
integrating this inequality yields, for any $k\ge K$,
\begin{equation}\label{S-bound}
S(t)\le\frac{k^2}{1-10k^2t},
\qquad
0\le t<\min\{T^*,(10k^2)^{-1}\}.
\end{equation}
In Steps~2--3, we use the coefficients
$S_\theta$, $T_\theta$, and $d_\theta$
defined in \eqref{threshold-coeff}, with $K$ replaced by $k$.

We also have $|uv_x|\le AB\le S/2$.
The nonlocal term in the slope equation satisfies
\[
\|P*|2uvu_x+u(g+\bar\rho^2)|\|_{L^\infty}
+2\|P*|uv_xu_x|\|_{L^\infty}
\le A(4AB+\bar\rho^2+N)\le2S^{3/2}.
\]
Similarly, \eqref{v-characteristic} gives
\[
\|(\partial_t+uv\partial_x)v\|_{L^\infty}
\le B(6AB+\bar\rho^2+N)\le3S^{3/2}.
\]
\proofstep{Step 2. A lower bound for $|v|$ along characteristics.}
Integrating the preceding estimate along $X(t,x_0)$
and using \eqref{S-bound}, we obtain
\begin{equation}\label{v-sign}
\sgn(v_0(x_0))v(t,X(t,x_0))
\ge\nu-\frac{3k}{5}
\bigl[(1-10k^2t)^{-1/2}-1\bigr].
\end{equation}
The definition of $T_\theta$ therefore gives
\[
\sgn(v_0(x_0))v(t,X(t,x_0))\ge\theta\nu,
\qquad S(t)\le S_\theta,
\qquad 0\le t\le T_\theta,\quad t<T^*.
\]
\proofstep{Step 3. The Riccati inequality and optimization.}
Set $y(t)=-\sgn(v_0(x_0))u_x(t,X(t,x_0))$.
The slope equation and the preceding estimates give
\[
y'\ge\theta\nu y^2-\frac12S_\theta y-2S_\theta^{3/2},
\qquad y\ge0.
\]
If $y_0\ge\Phi_\theta(k,\nu)$, the differential inequality
implies that $y(t)\ge y_0$ for
$0\le t<\min\{T^*,T_\theta\}$.
Integrating it and using the definition of $\Phi_\theta$,
we obtain
\[
T^*\le\tau_\theta(k,\nu,y_0)\le T_\theta.
\]
Indeed, if $T^*>\tau_\theta(k,\nu,y_0)$,
the same inequality forces $y(t)\to\infty$ as
$t\uparrow\tau_\theta(k,\nu,y_0)$,
contradicting the boundedness of $\|u_x(t)\|_{L^\infty}$
on compact subintervals of $[0,T^*)$.
If $y_0>\Phi_\theta(k,\nu)$, the explicit formula gives
$\tau_\theta(k,\nu,y_0)<T_\theta$.

The threshold can be optimized over $\theta\in(0,1)$.
Indeed, $\theta\mapsto\Phi_\theta(k,\nu)$ is continuous and
\[
\lim_{\theta\downarrow0}\Phi_\theta(k,\nu)
=\lim_{\theta\uparrow1}\Phi_\theta(k,\nu)=\infty,
\]
so there exists $\theta_*\in(0,1)$ such that
$\Phi_{\theta_*}(k,\nu)\le\Phi_\theta(k,\nu)$
for all $\theta\in(0,1)$.
The scaling $(u,v,\rho)\mapsto(\lambda u,\lambda^{-1}v,\rho)$,
$\lambda>0$, also preserves the system and the lifespan,
while
\[
(A_0,B_0,N_0,\nu,y_0)
\longmapsto
(\lambda A_0,\lambda^{-1}B_0,N_0,
\lambda^{-1}\nu,\lambda y_0).
\]
Applying the same threshold to the scaled data
and taking the infimum over $\lambda>0$ and
$\theta\in(0,1)$ gives \eqref{scaling-criterion}.
\proofstep{Step 4. Passage to low regularity.}
Fix $T<T^*$ and mollify the initial data with a
nonnegative kernel $\kappa_\epsilon$ of unit mass,
supported in $[-\epsilon,\epsilon]$.
By Theorem~\ref{thm:local} and Sobolev embedding,
\[
(\rho_\epsilon-\rho,u_\epsilon-u,v_\epsilon-v)
\to0
\quad\text{in }C([0,T];L^\infty\times(W^{1,\infty})^2).
\]
It follows from \eqref{flow} that
$X_\epsilon\to X$ and $X_{\epsilon,a}\to X_a$
uniformly on $[0,T]\times\mathbb R$, with
\[
C_T^{-1}\le X_{\epsilon,a}\le C_T.
\]
The characteristic formula for $n_\epsilon$ then gives
\[
\sup_{\epsilon,\,t\le T}
\|n_\epsilon(t)\|_{L^1_{\rm uloc}}
\le C_T(1+\|n_0\|_{L^1_{\rm uloc}}).
\]
Since $n_{0,\epsilon}\to n_0$ in $L^1_{\rm loc}$,
the characteristic formula and the uniform Jacobian
bounds give
\[
n_\epsilon\to n
\quad\text{in }C([0,T];L^1_{\rm loc}),
\]
where $n=v-v_{xx}$ in distributions.
This follows from local approximation of $n_0$
by compactly supported continuous functions.
The identity $g_\epsilon=n_\epsilon u_{\epsilon,x}
+\rho_\epsilon^2-\bar\rho^2$ then gives
$g_\epsilon\to g$ in $C([0,T];L^1_{\rm loc})$.
Together with the uniform $L^1_{\rm uloc}$ bounds
and the exponential decay of $P$, this convergence
allows passage to the limit in the convolution estimates.

The initial quantities satisfy
\[
A_{0,\epsilon}\le A_0,\qquad
B_{0,\epsilon}\le B_0,\qquad
N_{0,\epsilon}
\le N_0+\omega_{u_{0,x}}(2\epsilon)B_0
+\omega_{\rho_0}(2\epsilon)^2=N_0+o(1),
\]
where $\omega_f$ denotes the uniform modulus of
continuity of $f$.
The error terms vanish since $s>3/2$.
Continuous dependence for the ODE bounds therefore
gives the estimates of Step~1 with initial values
$A_0$, $B_0$, and $N_0$.
Passing to the limit along characteristics gives
the estimates of Steps~2--3.
The Riccati argument applies directly to the limit
solution, including the case $y_0=\Phi_\theta(K,\nu)$,
so no blow-up condition is required for the mollified data.
The same argument applies to the coupled criterion below.
\end{proof}

\Needspace{7\baselineskip}
\subsection*{A coupled blow-up criterion}
\begingroup
\begin{proposition}\label{prop:coupled}
Assume the regularity and momentum conditions of
Theorem~\ref{thm:blowup}, with $A_0>0$,
and choose $x_0$ such that $v_0(x_0)\ne0$.
Define $\nu$ and $y_0$ as in that theorem.
Let $(A_c,B_c,N_c)$ solve the system obtained by
replacing the inequalities in \eqref{ABN} with equalities,
with initial values $(A_0,B_0,N_0)$.
Let $T_c>0$ satisfy
\begin{equation}\label{H-coupled}
\int_0^{T_c}B_c(s)
\bigl(6A_c(s)B_c(s)+\bar\rho^2+N_c(s)\bigr)\,ds=\nu,
\end{equation}
and let $r$ solve
\begin{equation}\label{terminal}
\begin{aligned}
r'
&=-\nu+\int_0^t B_c(s)
\bigl(6A_c(s)B_c(s)+\bar\rho^2+N_c(s)\bigr)\,ds\\
&\quad+A_cB_c\,r
+A_c(4A_cB_c+\bar\rho^2+N_c)r^2,
\qquad r(T_c)=0.
\end{aligned}
\end{equation}
Then $T_c<\infty$ and $r(0)>0$.
Setting $y_c=r(0)^{-1}$, we have
\[
y_0\ge y_c\ \Longrightarrow\ T^*\le T_c,
\qquad
y_0>y_c\ \Longrightarrow\ T^*<T_c.
\]
Moreover,
\[
y_c<\inf_{0<\theta<1}\Phi_\theta(K,\nu),
\]
and the blow-up condition is invariant under
$(u,v,\rho)\mapsto(\lambda u,\lambda^{-1}v,\rho)$,
$\lambda>0$.
\end{proposition}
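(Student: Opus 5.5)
The plan is to run the Riccati argument of Theorem~\ref{thm:blowup}, but to keep the exact solution $(A_c,B_c,N_c)$ of the comparison system instead of the cruder bound $S\le S_\theta$. The order is: compare $(A,B,N)$ with $(A_c,B_c,N_c)$; derive a lower bound for $|v|$ along the characteristic; rewrite the slope inequality for $r=1/y$ and compare with \eqref{terminal}; then prove $T_c<\infty$, $r(0)>0$, the strict bound $y_c<\Phi_\theta$, and scaling invariance.

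\textbf{Comparison with the coupled system.} The right-hand sides of \eqref{ABN} are nondecreasing in each of $A,B,N\ge0$, so the system is quasimonotone. For smooth solutions, the differential form of \eqref{ABN} and the standard comparison lemma give $A\le A_c$, $B\le B_c$, $N\le N_c$ on the common interval of existence. As in Step~1, define
\[
W(t)=\nu-\int_0^t B_c(6A_cB_c+\bar\rho^2+N_c)\,ds .
\]
Integrating \eqref{v-characteristic} along $X(t,x_0)$ gives $\sgn(v_0(x_0))\,v(t,X(t,x_0))\ge W(t)$, and $W>0$ on $[0,T_c)$ by \eqref{H-coupled}. With $y=-\sgn(v_0(x_0))u_x(t,X(t,x_0))$, the slope equation and the nonlocal bounds of Step~1 give
\[
y'\ge W y^2-A_cB_c\,y-A_c(4A_cB_c+\bar\rho^2+N_c).
\]
For $r=1/y$ this becomes $r'\le -W+A_cB_c r+A_c(4A_cB_c+\bar\rho^2+N_c)r^2$, and the right-hand side is exactly that of \eqref{terminal}. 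If $y_0\ge y_c$, then $r(0)\le r_c(0)$, and the comparison principle for this scalar ODE gives $r\le r_c$ as long as $r>0$. Since $r_c(T_c)=0$, $r$ reaches zero no later than $T_c$, so $y\to\infty$ and $T^*\le T_c$. If $y_0>y_c$, the gap $r(0)<r_c(0)$ together with strict comparison makes $r$ vanish strictly before $T_c$, so $T^*<T_c$. Low regularity is handled exactly as in Step~4: the comparison bounds pass to the limit through the mollified data.

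\textbf{Finiteness of $T_c$ and positivity of $r(0)$.} We have $B_c\ge B_0\ge\nu>0$ and $A_0>0$, so $A_c'\ge12B_0A_c^2$. Hence $A_c$ blows up at some finite $t_*$, with $A_c\ge(12B_0(t_*-t))^{-1}$, and therefore $\int_0^{t}A_c\,ds\to\infty$ as $t\uparrow t_*$. From $B_c'\ge3A_cB_c^2\ge3B_0A_cB_c$ it follows that $B_c\to\infty$ as $t\uparrow t_*$. Then
\[
\int_0^t 6A_cB_c^2\,ds\ge2\bigl(B_c(t)-B_0\bigr)\to\infty,
\]
so the integral in \eqref{H-coupled} reaches $\nu$ at some $T_c<t_*$, and all coefficients are bounded on $[0,T_c]$. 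For $r_c(0)>0$, integrate \eqref{terminal} backward from $T_c$. At $r=0$ we have $r'=-W(t)<0$ for $t<T_c$, so $r=0$ is a barrier in reversed time and $r_c>0$ on $[0,T_c)$. Backward blow-up is excluded because in reversed time $d r/d(-t)=W-A_cB_cr-(\cdots)r^2\le W$ whenever $r\ge0$, so $r_c$ stays bounded.

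\textbf{Strict improvement and scaling.} The main obstacle is the strict inequality $y_c<\Phi_\theta(K,\nu)$ for every $\theta$. I would take $y_0=\Phi_\theta(K,\nu)$ and observe that the constant-coefficient inequality used in Step~3 dominates the coupled one on $[0,T_\theta]$. Concretely, $W\ge\theta\nu$, $A_cB_c\le S_\theta/2$ and $A_c(4A_cB_c+\cdots)\le2S_\theta^{3/2}$ there, because $S_c=\bar\rho^2+A_c^2+B_c^2+N_c$ satisfies $S_c'\le10S_c^2$. Comparison then shows that $r_c$ built from these data lies strictly above the reciprocal of the Step~3 Riccati solution. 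Strictness comes from the fact that at $t=0$ the bound $W(0)=\nu>\theta\nu$ is strict, and also $A_0B_0<K^2/2\le S_\theta/2$. By continuity this strict gap persists, so the threshold $y_c=1/r_c(0)$ is strictly smaller, and taking the infimum over $\theta$ uses the attainment of the minimum noted in the remark after Theorem~\ref{thm:blowup}. Finally, under $(u,v,\rho)\mapsto(\lambda u,\lambda^{-1}v,\rho)$ the comparison system maps $(A_c,B_c,N_c)\mapsto(\lambda A_c,\lambda^{-1}B_c,N_c)$, since each right-hand side in \eqref{ABN} scales consistently. Under the same map $\nu\mapsto\nu/\lambda$ and the integrand in \eqref{H-coupled} scales by $\lambda^{-1}$, so $T_c$ is unchanged, while $r\mapsto\lambda^{-1}r$ solves the scaled \eqref{terminal}. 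Hence $y_c\mapsto\lambda y_c$, matching $y_0\mapsto\lambda y_0$, and the condition $y_0\ge y_c$ is invariant.
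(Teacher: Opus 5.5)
Your overall structure matches the paper's, and most of it goes through. You compare $(A,B,N)$ with $(A_c,B_c,N_c)$, bound $\sgn(v_0(x_0))v\ge W$ along $X(t,x_0)$, and compare $1/y$ with \eqref{terminal}. You use the backward barrier for $r_c(0)>0$, compare on $[0,T_\theta]$ with the reciprocal of the constant-coefficient Riccati solution, use attainment of $\min_\theta\Phi_\theta$, and apply the scaling. For the comparison on $[0,T_\theta]$ you go directly, skipping the paper's intermediate $r_{\rm sc}$, which is fine.

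\textbf{Gap.} Your proof that $T_c<\infty$, which is also what shows $T_c$ exists, contains three errors.
\begin{itemize}
\item From $A_c'\ge12B_0A_c^2$, integrating $(1/A_c)'\le-12B_0$ over $[t,t_*]$ gives $A_c(t)\le(12B_0(t_*-t))^{-1}$. That is an upper bound, not a lower one.
\item The divergence of $\int_0^tA_c\,ds$ is false. Since
\[
\Bigl(\frac1{B_c}\Bigr)'=-3A_c-\frac{\bar\rho^2+N_c}{B_c}\le-3A_c,
\]
you have $\int_0^tA_c\,ds<1/(3B_0)$ on the whole lifespan of the comparison system. So $(\log B_c)'\ge3B_0A_c$ cannot give $B_c\to\infty$.
\item The inequality $\int_0^t6A_cB_c^2\,ds\ge2(B_c(t)-B_0)$ runs the wrong way, because $B_c(t)-B_0=\int_0^tB_c(3A_cB_c+\bar\rho^2+N_c)\,ds\ge\int_0^t3A_cB_c^2\,ds$.
\end{itemize}

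\textbf{Repair.} The conclusion is true, and the fix is the paper's argument. As long as the integral in \eqref{H-coupled} is at most $\nu$, $B_c'\le B_c(6A_cB_c+\bar\rho^2+N_c)$ gives $B_c\le B_0+\nu$. Also
\[
(\log A_c)'=2(6A_cB_c+\bar\rho^2+N_c)\le\frac2{B_0}B_c(6A_cB_c+\bar\rho^2+N_c),
\]
which gives $A_c\le A_0e^{2\nu/B_0}$, and Gronwall then bounds $N_c$. Hence the comparison solution cannot blow up before the integral reaches $\nu$. Since the integrand is at least $6A_0B_0^2$, you get $T_c\le\nu/(6A_0B_0^2)$.

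\textbf{Minor points.}
\begin{itemize}
\item $A_0B_0<K^2/2$ can hold with equality, for example when $\bar\rho=N_0=0$ and $A_0=B_0$. This is harmless, since $A_0B_0\le K^2/2<S_\theta/2$ is all you use.
\item Your strictness argument for $y_c<\Phi_\theta$ also needs $r_c(T_\theta)\ge0$, that is $T_\theta\le T_c$. Say explicitly that this follows from $W\ge\theta\nu$ on $[0,T_\theta]$. In fact the strict bound $B_c(6A_cB_c+\bar\rho^2+N_c)<3S_c^{3/2}$ gives $r_c(T_\theta)>0$ directly.
\item For $y_0>y_c$, your ``strict comparison'' is fine once made quantitative. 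As in the paper, $r_c-1/y$ is nondecreasing while it is positive, so it stays at least $r_c(0)-1/y_0>0$. This contradicts $r_c(T_c)=0$ if the solution survives on $[0,T_c)$.
\end{itemize}
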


\begin{proof}
The comparison principle applied to \eqref{ABN} gives
$A\le A_c$, $B\le B_c$ and $N\le N_c$.
The functions $A_c$, $B_c$ and $N_c$ are nondecreasing,
and
\[
B_c'\le B_c(6A_cB_c+\bar\rho^2+N_c),\qquad
(\log A_c)'
\le\frac{2}{B_0}B_c(6A_cB_c+\bar\rho^2+N_c).
\]
As long as the integral in \eqref{H-coupled}
does not exceed $\nu$, integration gives
\[
B_c(t)\le B_0+\nu,\qquad
A_c(t)\le A_0e^{2\nu/B_0}.
\]
The equation for $N_c$ and Gronwall's inequality
then bound $N_c$ on every finite time interval.
Thus the comparison solution cannot cease to exist
before the integral reaches $\nu$.
Since the integrand is at least $6A_0B_0^2>0$, we then have $ 0<T_c\le\frac{\nu}{6A_0B_0^2}<\infty$ and the comparison solution extends beyond $T_c$.

For $y>0$, the estimates in Steps~1--2 show that
$1/y$ satisfies the differential inequality obtained
from \eqref{terminal} by replacing $=$ with $\le$.
After reversing time, the forcing term is nonnegative
and the linear and quadratic coefficients are nonpositive.
Thus $r$ exists on $[0,T_c]$ and is positive for $t<T_c$.

If $y_0\ge y_c$, the differential inequality gives
$0<1/y(t)\le r(t)$ for $0\le t<\min\{T^*,T_c\}$.
Since $r(T_c)=0$, we obtain $T^*\le T_c$.
For $y_0>y_c$, subtracting the inequality for $1/y$
from \eqref{terminal} yields
\[
r(t)-\frac1{y(t)}
\ge r(0)-\frac1{y_0}>0.
\]
As $r(t)\to0$ when $t\uparrow T_c$, the solution
cannot exist on all of $[0,T_c)$.
Hence $T^*<T_c$.
The approximation argument in Step~4 extends these
conclusions to $s>3/2$..

To compare the two blow-up conditions, set $k=K$ and let
\[
T_{\rm sc}
=\frac{1-(1+5\nu/(3k))^{-2}}{10k^2},
\]
the time at which the lower bound in \eqref{v-sign}
vanishes.
Applying the estimate used in \eqref{S-bound}
to the comparison system gives
\[
\bar\rho^2+A_c^2+B_c^2+N_c
\le\frac{k^2}{1-10k^2t}.
\]
Since $A_c,B_c>0$, we have
\[
\begin{aligned}
A_cB_c
\le\frac{k^2}{2(1-10k^2t)},\\
B_c(6A_cB_c+\bar\rho^2+N_c)
&<3\left(\frac{k^2}{1-10k^2t}\right)^{3/2},\\
A_c(4A_cB_c+\bar\rho^2+N_c)
&<2\left(\frac{k^2}{1-10k^2t}\right)^{3/2}.
\end{aligned}
\]
Integrating the first inequality gives $T_c>T_{\rm sc}$.
Let $r_{\rm sc}$ solve the terminal problem obtained
from \eqref{terminal} by replacing the three coefficient
expressions above with their respective upper bounds
and imposing $r_{\rm sc}(T_{\rm sc})=0$.
Since $r(T_{\rm sc})>0$, comparison backward in time yields
$r(0)>r_{\rm sc}(0)$.

For $\theta\in(0,1)$, we have $T_\theta<T_{\rm sc}$
and $r_{\rm sc}(T_\theta)>0$.
On $[0,T_\theta]$, the velocity lower bound is at least
$\theta\nu$, while the linear and quadratic coefficients
are at most $S_\theta/2$ and $2S_\theta^{3/2}$.
Comparing backward with the reciprocal of the
constant-coefficient solution in Step~3 gives
\[
y_c<1/r_{\rm sc}(0)<\Phi_\theta(k,\nu).
\]
Taking $\theta=\theta_*$ as chosen in Step~3 yields
$y_c<\inf_{0<\theta<1}\Phi_\theta(k,\nu)$.

Under reciprocal scaling,
$(A_c,B_c,N_c,r)\mapsto
(\lambda A_c,\lambda^{-1}B_c,N_c,\lambda^{-1}r)$,
while $T_c$ is unchanged.
Hence $y_c\mapsto\lambda y_c$ and
$y_0\mapsto\lambda y_0$, proving the invariance
of the blow-up condition.
\end{proof}

\begin{remark}
The comparison system can be reduced to a linear system
by the change of variable $d\tau/dt=A_cB_c$, $\tau(0)=0$:
\[
\frac{d}{d\tau}
\binom{A_cB_c}{\bar\rho^2+N_c}
=
\begin{pmatrix}
15&3\\
4&3
\end{pmatrix}
\binom{A_cB_c}{\bar\rho^2+N_c}
+\binom{0}{\bar\rho^2},
\]
with initial value $(A_0B_0,\bar\rho^2+N_0)^\top$.
The matrix has eigenvalues $9\pm4\sqrt3$, and
\[
\begin{aligned}
B_c(t(\tau))
&=B_0
\left(\frac{A_c(t(\tau))B_c(t(\tau))}{A_0B_0}\right)^{1/3}
e^{-2\tau},\\
t(\tau)
&=\int_0^\tau
\frac{d\xi}{A_c(t(\xi))B_c(t(\xi))}.
\end{aligned}
\]
The equation for $B_c$ also gives
\[
\begin{aligned}
&\nu-\int_0^{t(\tau)}
B_c(s)\bigl(6A_c(s)B_c(s)+\bar\rho^2+N_c(s)\bigr)\,ds\\
&\qquad=\nu+B_0-B_c(t(\tau))
-3\int_0^\tau B_c(t(\xi))\,d\xi.
\end{aligned}
\]
If $\tau_c$ is the first zero of this expression,
then $T_c=t(\tau_c)$, and \eqref{terminal} gives
$y_c=1/r(0)$.
\end{remark}
\begin{remark}
For the $v_x$ equation, the corresponding mixed quantity is $mv_x-(\rho^2-\bar\rho^2)$. The minus sign comes from the opposite signs of the density terms in the two momentum equations.
\end{remark}

\section{Critical norm inflation}\label{sec:inflation}

We construct perturbations with small critical norms
and large slopes, while keeping the initial convolution
quantities uniformly bounded.
The blow-up result gives short lifespans, and the following
criterion is used to prove norm inflation in $u$.

\begin{lemma*}[One-component blow-up criterion]
Let $s\ge2$, $\bar\rho\in[0,1]$, and
$(\rho_0-\bar\rho,u_0,v_0)\in H^{s-1}\times(H^s)^2$.
Let $(\rho,u,v)$ be the corresponding maximal strong
solution of \eqref{3dp}, with lifespan $T^*$.
If $T^*<\infty$ and
\[
\sup_{t<T^*}
\bigl(\|u(t)\|_{L^\infty}
+\|v(t)\|_{W^{1,\infty}}\bigr)<\infty,
\]
then
\[
\int_0^{T^*}\|u(t)\|_{B^1_{\infty,\infty}}\,dt=\infty.
\]
The same conclusion holds for GX system with
$(u_0,v_0)\in(H^s)^2$.
\end{lemma*}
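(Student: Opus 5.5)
Throughout we argue by contradiction: suppose $T^*<\infty$, the stated sup bound $M:=\sup_{t<T^*}(\|u\|_{L^\infty}+\|v\|_{W^{1,\infty}})<\infty$ holds, and $\int_0^{T^*}\|u(t)\|_{B^1_{\infty,\infty}}\,dt<\infty$. The goal is to show $\int_0^{T^*}Q(t)\,dt<\infty$, where $Q$ is the quantity in Remark~\ref{rem:continuation}. The Sobolev version of Theorem~\ref{thm:continuation} (valid for $s\ge2$ by that remark) then says the solution does not blow up at $T^*$, which is a contradiction. Since
\[
Q\le 2\|(uv)_x\|_{L^\infty}+3\|vu_x-uv_x\|_{L^\infty}\lesssim M\bigl(\|u_x\|_{L^\infty}+M\bigr),
\]
it suffices to bound $\int_0^{T^*}\|u_x\|_{L^\infty}\,dt$. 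The difficulty is that $B^1_{\infty,\infty}$ does not embed in $W^{1,\infty}$. I would therefore not use $Q$ directly. Instead I would redo the $L^2$ estimates of the proof of Theorem~\ref{thm:continuation} with a logarithmic interpolation.

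\textbf{Step 1 ($H^2$ energy estimate).} Following the proof of Theorem~\ref{thm:continuation}, estimate $\|m\|_{L^2}+\|n\|_{L^2}$ in Eulerian form. From $m_t+uvm_x+3vu_xm+\rho^2u=0$, multiply by $m$ and integrate by parts:
\[
\tfrac{d}{dt}\|m\|_{L^2}^2\lesssim\|(uv)_x\|_{L^\infty}\|m\|_{L^2}^2+\|vu_x\|_{L^\infty}\|m\|_{L^2}^2+\|\rho\|_{L^\infty}^2\|u\|_{L^2}\|m\|_{L^2}.
\]
The $n$ equation is handled the same way. The coefficients involve only $\|u_x\|_{L^\infty}$, $M$ and $\|\rho\|_{L^\infty}$. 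For the density, $\rho(t,X)X_a=\rho_0$ and $X_a=\exp\int(uv)_x$ give $\|\rho(t)\|_{L^\infty}\le\|\rho_0\|_{L^\infty}\exp\bigl(C\int_0^t(1+\|u_x\|_{L^\infty})\bigr)$. Setting $E=\|u\|_{H^2}+\|v\|_{H^2}$ plus a lower-order term, this yields
\[
E'\lesssim(1+\|u_x\|_{L^\infty})E\cdot e^{C\int_0^t(1+\|u_x\|_{L^\infty})}.
\]

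\textbf{Step 2 (logarithmic interpolation).} Use the standard estimate
\[
\|u_x\|_{L^\infty}\lesssim 1+\|u\|_{B^1_{\infty,\infty}}\log\bigl(e+\|u\|_{H^2}\bigr),
\]
obtained by splitting at frequency $2^N$ with $2^N\sim\|u\|_{H^2}$: the low modes are summed via $B^1_{\infty,\infty}$, and the high modes are controlled by $H^2\hookrightarrow B^{3/2}_{\infty,2}$. Write $\beta(t)=\|u\|_{B^1_{\infty,\infty}}$.

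\textbf{Step 3 (closing the estimate).} This is the main obstacle: the exponential factor coming from $\rho$ destroys the Osgood-type argument. I would first try to avoid it. Set $w=\log(e+E)$ and $Z(t)=\int_0^t(1+\beta w)$. Then
\[
w'\lesssim(1+\beta w)\,e^{CZ}.
\]
Because $\int\beta<\infty$, one gets $Z'\lesssim 1+\beta w$, which is a double-exponential structure. To close it, I would instead bound $\|\rho\|_{L^\infty}$ using the sup hypothesis through the $\eta$ transport, or absorb the $\rho^2u$ term into $\|\eta\|_{H^1}$ so that $\rho$ enters only linearly. Then
\[
\tfrac{d}{dt}\bigl(\|\eta\|_{H^1}+E\bigr)\lesssim(1+\|u_x\|_{L^\infty})\bigl(\|\eta\|_{H^1}+E\bigr),
\]
using $\|\rho\|_{L^\infty}\lesssim\bar\rho+\|\eta\|_{H^1}$. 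The term $\rho^2u$ in $L^2$ is $\le(\bar\rho+\|\eta\|_{H^1})^2M$, which is quadratic in the energy. To handle that, I would pair the $m$ equation against $\rho$-weighted energies, or use $\|\eta\|_{L^\infty}\lesssim\|\eta\|_{H^1}^{1/2}$.

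In the GX case $\rho\equiv0$ and the issue disappears. With $\Lambda=\|\eta\|_{H^1}+E$ we get $(\log\Lambda)'\lesssim1+\beta\log\Lambda$. Gronwall then gives $\log\Lambda\le C\exp\bigl(C\int_0^{T^*}\beta\bigr)<\infty$. So $E$ and $\|\eta\|_{H^1}$ stay bounded up to $T^*$, and \eqref{continuation-alt} yields the contradiction. Equivalently, $\int_0^{T^*}\|u_x\|_{L^\infty}<\infty$, so $\int_0^{T^*}Q<\infty$, contradicting Remark~\ref{rem:continuation}.
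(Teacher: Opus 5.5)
\textbf{Verdict.} Your argument is complete for GX, but for the 3DP system, which is the main statement, there is a genuine gap: Step~3 never produces an inequality that closes.

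\textbf{The GX case.} Here your proof is a legitimate variant of the paper's. You propagate the global norms $\|m\|_{L^2}+\|n\|_{L^2}$ and interpolate $\|u_x\|_{L^\infty}$ against $H^2$. The paper instead propagates only the uniformly local quantity $\|P*|m|^2\|_{L^\infty}^{1/2}$ and interpolates against $L^2_{\rm uloc}$. One small slip: in Step~2 you need $2^{N/2}\sim\|u\|_{H^2}$, not $2^N$, which only changes constants in the logarithm.

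\textbf{Where the 3DP argument fails.}
\begin{itemize}
\item The claimed bound $\frac{d}{dt}(\|\eta\|_{H^1}+E)\lesssim(1+\|u_x\|_{L^\infty})(\|\eta\|_{H^1}+E)$ is false. The $\eta_x$ equation carries the source $-\rho(uv)_{xx}$, whose $L^2$ norm is of size $\|\rho\|_{L^\infty}\|uv\|_{H^2}\sim\|\eta\|_{L^\infty}E$, a product term. The term $\rho^2u$ in the $m$ equation is likewise a product.
\item The bound $\|\rho^2u\|_{L^2}\le(\bar\rho+\|\eta\|_{H^1})^2M$ is not correct: $M$ controls only $\|u\|_{L^\infty}$, and $\rho^2\notin L^2$ when $\bar\rho>0$.
\item The hypothesis gives no control of $\rho$. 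The relation $\rho(t,X)X_a=\rho_0$ with $X_a=\exp\int_0^t(uv)_x$ ties $\|\rho\|_{L^\infty}$ to $\int\|u_x\|_{L^\infty}$, which is the unknown.
\item The repair $\|\eta\|_{L^\infty}\lesssim\|\eta\|_{H^1}^{1/2}$ needs an a priori bound on $\|\eta\|_{L^2}$ that you do not have.
\end{itemize}
So your final inequality for $\Lambda$ is justified only when $\rho\equiv0$.

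\textbf{The missing idea.} The exponential growth of $\|\rho\|_{L^\infty}$ is not the real obstruction. Along characteristics $\frac{d}{dt}\rho^2=-2\rho^2(uv)_x$, so $\|\rho\|_{L^\infty}^2$ obeys a \emph{linear} Gronwall inequality with coefficient $C(1+\|u_x\|_{L^\infty})$. It can therefore be carried additively in the Gronwall quantity, and only its logarithm enters the Osgood step. What is needed is that the density term in the momentum estimate have a \emph{bounded} coefficient in front of $\|\rho\|_{L^\infty}^2$. Your global $L^2$ estimate instead produces $\|\rho\|_{L^\infty}^2\|u\|_{L^2}\le\|\rho\|_{L^\infty}^2\|m\|_{L^2}$, and this product is what creates the double exponential.

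\textbf{How the paper closes it.} The paper works with $P*|m|^2$. From
$(m^2)_t+(uvm^2)_x=(uv_x-5vu_x)m^2-2\rho^2um$ and $P*|m|\le(P*|m|^2)^{1/2}$, the density term is bounded by $2M\|\rho\|_{L^\infty}^2(P*|m|^2)^{1/2}$. Hence $Z=e+\|P*|m|^2\|_{L^\infty}^{1/2}+\|\rho\|_{L^\infty}^2$ satisfies $Z'\le C(1+\|u_x\|_{L^\infty})Z$. The logarithmic interpolation then closes exactly as in your GX argument. Neither $n$ nor $\eta$ needs to be estimated, because a bounded $\|u_x\|_{L^\infty}$ already contradicts Remark~\ref{rem:continuation}.

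\textbf{A fix within your framework.} If you prefer to keep global norms, split $\rho^2u=\bar\rho^2u+(\rho^2-\bar\rho^2)u$ and propagate $f=\rho^2-\bar\rho^2\in L^2$. It satisfies
\[
\frac{d}{dt}\|f\|_{L^2}\le\tfrac32\|(uv)_x\|_{L^\infty}\|f\|_{L^2}+2\bar\rho^2M\bigl(\|m\|_{L^2}+\|n\|_{L^2}\bigr),
\]
while the density contributions to the $m$ and $n$ estimates become $\bar\rho^2\|m\|_{L^2}+M\|f\|_{L^2}$ and $\bar\rho^2\|n\|_{L^2}+M\|f\|_{L^2}$. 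Then $\Lambda=\|m\|_{L^2}+\|n\|_{L^2}+\|f\|_{L^2}$ obeys $\Lambda'\le C(1+\|u_x\|_{L^\infty})\Lambda$. Your Steps~2--3 then go through without any $H^1$ bound on $\eta$.
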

\begin{proof}
\begingroup
Suppose that the stated time integral is finite.
We combine a local momentum estimate with a logarithmic
estimate to bound $\|u_x\|_{L^\infty}$.
Set
\[
Z(t)=e+\|P*|m(t)|^2\|_{L^\infty}^{1/2}
+\|\rho(t)\|_{L^\infty}^2,
\]
where the last term vanishes for GX system.
The momentum equation gives
\[
(m^2)_t+(uvm^2)_x
=(uv_x-5vu_x)m^2-2\rho^2um.
\]
Convolving with $P$ and using $|P_x|\le P$,
the weighted Cauchy--Schwarz inequality, and the
density equation along characteristics, we obtain
\[
Z(t)\le Z(0)+C_{T^*}\int_0^t
\bigl(1+\|u_x(\tau)\|_{L^\infty}\bigr)Z(\tau)\,d\tau,
\]
where $C_{T^*}$ depends only on $T^*$ and the assumed
bound for $\|u\|_{L^\infty}+\|v\|_{W^{1,\infty}}$.
Gronwall's inequality then yields
\[
\log Z(t)\le\log Z(0)+C_{T^*}\int_0^t
\bigl(1+\|u_x(\tau)\|_{L^\infty}\bigr)\,d\tau.
\]

Since $m=(1-\partial_x^2)u$ and
$\|m\|_{L^2_{\rm uloc}}
\le C\|P*|m|^2\|_{L^\infty}^{1/2}$,
a dyadic decomposition gives, for $J\ge0$,
\[
\|u_x\|_{L^\infty}
\le C_{T^*}
+C(J+1)\|u\|_{B^1_{\infty,\infty}}
+C2^{-J/2}Z.
\]
Indeed, the lowest-frequency block is bounded by
$C\|u\|_{L^\infty}$, and the blocks $0\le j\le J$
contribute at most
$C(J+1)\|u\|_{B^1_{\infty,\infty}}$.
For $j>J$, the frequency-localized kernel of
$\partial_x(1-\partial_x^2)^{-1}$
decays rapidly and has $L^2$ norm bounded by $C2^{-j/2}$.
Splitting the convolution into unit intervals gives
\[
\|\Delta_j u_x\|_{L^\infty}
\le C2^{-j/2}\|m\|_{L^2_{\rm uloc}},
\]
and summing over $j>J$ yields the last term.

Choosing $J=\lceil2\log_2 Z\rceil$ and substituting
into the estimate for $\log Z$, we obtain
\[
\log Z(t)\le\log Z(0)+C_{T^*}t
+C_{T^*}\int_0^t
\|u(\tau)\|_{B^1_{\infty,\infty}}
\log Z(\tau)\,d\tau.
\]
The assumed integrability and Gronwall's inequality
therefore imply $\sup_{t<T^*}Z(t)<\infty$.
Since $u_x=P_x*m$, $|P_x|\le P$, and $\|P\|_{L^1}=1$,
Cauchy--Schwarz gives
\[
\|u_x(t)\|_{L^\infty}
\le\|P*|m(t)|^2\|_{L^\infty}^{1/2}
\le Z(t).
\]
Thus $u,v$ are uniformly bounded in $W^{1,\infty}$
on $[0,T^*)$, contradicting
Remark~\ref{rem:continuation}.

The above estimates are justified for $s\ge2$
by a standard approximation argument.
\endgroup
\end{proof}

\begin{proof}[Proof of Theorem~\ref{thm:inflation}]
Following \cite{r21,r24}, we construct perturbations
with small critical norms and large slopes,
while keeping the initial convolution quantities
uniformly bounded.
The blow-up result and the preceding lemma then
yield norm inflation in $u$.
\proofstep{Step 1. Construction of the initial data.}
Fix $0<\varepsilon<1$ and choose
$(\tilde\eta,\tilde u,\tilde v)\in\mathcal S(\mathbb R)^3$
such that
\[
\|\tilde\eta-\eta_*\|_{B^{1/p}_{p,r}}
+\|\tilde u-u_*\|_{B^{1+1/p}_{p,r}}
+\|\tilde v-v_*\|_{B^{1+1/p}_{p,r}}
<\frac{\varepsilon}{2}.
\]
Since $\tilde v$ tends to zero at infinity, a sufficiently
small Schwartz perturbation preserves this inequality
and ensures that $\tilde v(x_0)=\nu>0$ for some
$x_0\in\mathbb R$.
Choose a real-valued $\phi\in\mathcal S(\mathbb R)$
with Fourier support in an annulus away from zero
and $\phi'(0)<0$, and positive coefficients
$(a_j)\in\ell^r$ with $\sum_{j\ge1}a_j=\infty$.
Set
\[
\rho_{0,M}=\bar\rho+\tilde\eta,\qquad
u_{0,M}=\tilde u+\delta\sum_{j=1}^M
a_j2^{-j}\phi\bigl(2^j(x-x_0)\bigr),\qquad
v_{0,M}=\tilde v.
\]

Frequency localization and scaling give
\[
\begin{gathered}
\|u_{0,M}-\tilde u\|_{B^{1+1/p}_{p,r}}
\le C\delta\|(a_j)\|_{\ell^r},
\qquad
\|u_{0,M}-\tilde u\|_{W^{1,1}}\le C\delta,\\
y_{0,M}=-u_{0,M,x}(x_0)
=-\tilde u'(x_0)
+\delta|\phi'(0)|\sum_{j=1}^M a_j
\longrightarrow\infty,
\end{gathered}
\]
where $C$ is independent of $M$.
Fix $\delta>0$ sufficiently small that the initial
distance from $(\eta_*,u_*,v_*)$ is less than
$\varepsilon$ for every $M$.

Since $n_{0,M}=\tilde v-\tilde v''$,
\[
\|g_{0,M}\|_{L^1}
\le\|\tilde v-\tilde v''\|_{L^\infty}
\bigl(\|\tilde u'\|_{L^1}+C\delta\bigr)
+\|2\bar\rho\tilde\eta+\tilde\eta^2\|_{L^1}.
\]
Together with the $W^{1,1}$ bound above, Young's inequality
gives $\sup_M(A_{0,M}+B_{0,M}+N_{0,M})<\infty$.
We may therefore choose $k>0$, independent of $M$, such that
\[
\bar\rho^2+A_{0,M}^2+B_{0,M}^2+N_{0,M}\le k^2.
\]
\proofstep{Step 2. Norm inflation in $u$.}
For sufficiently large $M$,
$T_M^*<T_{1/2}<(10k^2)^{-1}$.
Hence \eqref{S-bound} gives
\[
\sup_{t<T_M^*}
\bigl(\|u(t)\|_{L^\infty}
+\|v(t)\|_{W^{1,\infty}}\bigr)<\infty.
\]
The preceding lemma and the embedding
$B^{1+1/p}_{p,r}\hookrightarrow B^1_{\infty,\infty}$
therefore imply
\[
\int_0^{T_M^*}\|u(t)\|_{B^{1+1/p}_{p,r}}\,dt=\infty.
\]
Since the solution norm is bounded on compact lifespan
intervals and $u_*\in B^{1+1/p}_{p,r}$, we obtain
\[
\limsup_{t\uparrow T_M^*}
\|u(t)-u_*\|_{B^{1+1/p}_{p,r}}=\infty.
\]
Choose $0<t_\varepsilon<T_M^*<\varepsilon$ such that
$\|u(t_\varepsilon)-u_*\|_{B^{1+1/p}_{p,r}}
>\varepsilon^{-1}$.

For GX, take $\bar\rho=0$, background data $(0,u_*,v_*)$,
and $\tilde\eta=0$ in the construction.
Then $\rho\equiv0$, and the same argument applies.
\end{proof}

\section{Global well-posedness for GX}\label{sec:global}

\begin{proof}[Proof of Theorem~\ref{thm:global}]
The sign conditions and conservation of the mixed energy
give uniform bounds on the transport coefficients.
For $\sigma\ge0$, global existence follows from
the continuation criterion.
For $-1/2<\sigma<0$, we also use positivity of the
Bessel kernel to estimate the $H^\sigma$ norms.
We first establish these estimates for smooth solutions
and then pass to the limit by approximation.
\proofstep{Step 1. Signs and energy estimates.}
For a smooth solution, the characteristic formula gives
\[
m(t,X(t,a))
=m_0(a)\exp\left(
-3\int_0^t(vu_x)(\tau,X(\tau,a))\,d\tau
\right).
\]
The analogous formula for $n$ has $uv_x$ in place
of $vu_x$, so both momenta preserve their initial signs.
Since $P\ge0$ and $|P_x|\le P$, we have
\begin{equation}\label{positive-velocity}
|u|=P*(\varepsilon_1m),\qquad
|v|=P*(\varepsilon_2n),\qquad
|u_x|\le|u|,\quad |v_x|\le|v|.
\end{equation}

As in \cite[Lemma 5.1]{r22},
the mixed energy
$E(t)=\int_{\mathbb R}(uv+u_xv_x)\,dx$
is conserved.
Indeed,
\[
E'(t)=2\int_{\mathbb R}
(uv_x-vu_x)(vm-un)\,dx=0,
\qquad
vm-un=\partial_x(uv_x-vu_x).
\]
Writing $E_*=|E(0)|$, the sign conditions give
\[
E_*=\int_{\mathbb R}
(\varepsilon_1m)\,P*(\varepsilon_2n)\,dx
=\iint_{\mathbb R^2}
P(y-z)(\varepsilon_1m(y))(\varepsilon_2n(z))\,dy\,dz.
\]
Moreover,
\[
P(x-y)P(x-z)
=\frac14e^{-|x-y|-|x-z|}
\le\frac14e^{-|y-z|}
=\frac12P(y-z).
\]
Multiplying by $(\varepsilon_1m(y))(\varepsilon_2n(z))$
and integrating, we obtain
\begin{equation}\label{energy-coeff}
|uv|\le E_*/2,\qquad
|vu_x|,|uv_x|\le E_*/2,\qquad
\|(uv)_x\|_{L^\infty}\le E_*.
\end{equation}
The integrals are finite for smooth Sobolev data,
and the identities are justified by spatial cutoffs;
no global $L^1$ assumption on the momenta is needed.
In particular, $Q(t)\le3E_*$.
For $\sigma\ge0$, the continuation criterion in
Section~\ref{sec:local} therefore excludes a finite
maximal existence time.
\proofstep{Step 2. The $L^2$ case and higher regularity.}
Integration by parts and \eqref{energy-coeff} give
\[
\left|\frac{d}{dt}\|m\|_{L^2}^2\right|
=\left|\int_{\mathbb R}(uv_x-5vu_x)m^2\,dx\right|
\le3E_*\|m\|_{L^2}^2,
\]
and the same estimate holds for $n$.
Thus
\[
\|m(t)\|_{L^2}+\|n(t)\|_{L^2}
\le e^{3E_*|t|/2}
\bigl(\|m_0\|_{L^2}+\|n_0\|_{L^2}\bigr).
\]
The continuation alternative gives global existence
in $(L^2)^2$, while the tame estimates in
Remark~\ref{rem:continuation}, with $\rho=0$,
bound the higher Sobolev norms on finite time intervals.
Hence smooth initial data with the prescribed signs
give global smooth solutions.
Approximation preserving these signs extends the result
to $\sigma\ge0$.
\proofstep{Step 3. Negative Sobolev estimates.}
Set $\theta=-\sigma\in(0,1/2)$.
The kernel $K_\theta$ of $(1-\partial_x^2)^{-\theta}$
is nonnegative, since
\[
K_\theta(x)
=\frac1{\Gamma(\theta)}
\int_0^\infty
r^{\theta-1}e^{-r}(4\pi r)^{-1/2}e^{-x^2/(4r)}\,dr.
\]
For nonnegative $f\in L^2$ and bounded nonnegative $h$,
\begin{equation}\label{positive-weight}
\begin{aligned}
\|hf\|_{H^{-\theta}}^2
&=\iint_{\mathbb R^2}
K_\theta(x-y)h(x)f(x)h(y)f(y)\,dx\,dy\\
&\le\|h\|_{L^\infty}^2\|f\|_{H^{-\theta}}^2.
\end{aligned}
\end{equation}
The kernel representation and Tonelli's theorem
justify this identity without assuming $f\in L^1$.

Let $X$ be an increasing $C^1$ diffeomorphism
with $L^{-1}\le X'\le L$, where $L\ge1$.
A change of variables gives
\[
\|\varphi\circ X\|_{L^2}\le L^{1/2}\|\varphi\|_{L^2},
\qquad
\|\varphi\circ X\|_{H^1}\le L^{1/2}\|\varphi\|_{H^1}.
\]
By density and complex interpolation,
\[
\|\varphi\circ X\|_{H^\theta}
\le L^{1/2}\|\varphi\|_{H^\theta},
\qquad 0\le\theta\le1.
\]
For $f\in L^2$,
\[
\int_{\mathbb R}
\left(\frac f{X'}\right)\!\circ X^{-1}(x)\,
\varphi(x)\,dx
=\int_{\mathbb R}f(a)\varphi(X(a))\,da,
\]
so duality yields
\begin{equation}\label{pushforward-bound}
\left\|
\left(\frac f{X'}\right)\!\circ X^{-1}
\right\|_{H^{-\theta}}
\le L^{1/2}\|f\|_{H^{-\theta}}.
\end{equation}

For the smooth solutions constructed above,
\eqref{energy-coeff} gives
\[
e^{-E_*|t|}\le X_a(t,a)\le e^{E_*|t|}.
\]
The conservative momentum equation implies
\[
\varepsilon_1m(t)
=\left(
\frac{h_m(t)\varepsilon_1m_0}{X_a(t)}
\right)\!\circ X(t)^{-1},
\qquad
h_m(t,a)
=\exp\left(
\int_0^t(uv_x-2vu_x)(\tau,X(\tau,a))\,d\tau
\right).
\]
The corresponding formula for $\varepsilon_2n$
has weight $h_n$, obtained by replacing
$uv_x-2vu_x$ with $vu_x-2uv_x$.
By \eqref{energy-coeff},
$0<h_m,h_n\le e^{3E_*|t|/2}$.
Applying \eqref{positive-weight} and
\eqref{pushforward-bound} with $L=e^{E_*|t|}$, we obtain
\[
\|m(t)\|_{H^{-\theta}}
\le e^{E_*|t|/2}
\|h_m(t)\varepsilon_1m_0\|_{H^{-\theta}}
\le e^{2E_*|t|}\|m_0\|_{H^{-\theta}}.
\]
The same estimate for $n$ proves \eqref{negative-bound}.
\proofstep{Step 4. Approximation and continuation.}
For $-1/2<\sigma<0$, set
\[
m_0^\delta=e^{\delta\partial_x^2}m_0,\qquad
n_0^\delta=e^{\delta\partial_x^2}n_0.
\]
These approximations preserve the prescribed signs
and converge in $H^\sigma$.
The corresponding velocities converge in $H^1$,
so $E_*^\delta\to E_*$.
The estimates of Step~3 are therefore uniform
on every finite time interval.

By Theorem~\ref{thm:local}, the smooth solutions
converge on compact subintervals of the limiting
solution's lifespan.
Thus \eqref{negative-bound}, the sign conditions,
and conservation of the mixed energy pass to the limit.
The resulting norm bound and the local existence time
exclude a finite maximal existence time.
Uniqueness and time regularity follow from
Theorem~\ref{thm:local}.
For convergent initial data with the prescribed signs,
the same uniform bounds allow us to apply local
continuous dependence on successive intervals,
giving continuous dependence on every finite
forward time interval.

Finally, the transformation
$(u(t),v(t))\mapsto(-u(-t),v(-t))$ preserves GX.
Applying the forward result to $(-m_0,n_0)$ gives
the negative-time solution, with the estimates
expressed in terms of $|t|$.
Uniqueness joins the two solutions at $t=0$,
and continuous dependence on compact intervals
of $\mathbb R$ follows in the same way.
\end{proof}

\begin{remark}
For the solutions of Theorem~\ref{thm:global},
\[
e^{-E_*|t|}\|m_0\|_{H^{-1}}
\le\|m(t)\|_{H^{-1}}
\le e^{E_*|t|}\|m_0\|_{H^{-1}},
\qquad t\in\mathbb R,
\]
and the same estimate holds for $n$.
Indeed, for smooth data with the prescribed signs,
\[
\frac{d}{dt}\|u\|_{H^1}^2
=2\int_{\mathbb R}um(uv_x-vu_x)\,dx,
\qquad
\left|\frac{d}{dt}\|u\|_{H^1}^2\right|
\le2E_*\|u\|_{H^1}^2,
\]
since $um\ge0$ and $|uv_x|,|vu_x|\le E_*/2$.
Integration and the identity
$\|m\|_{H^{-1}}=\|u\|_{H^1}$ give the stated bounds.
Sign-preserving approximation extends them to
the data in Theorem~\ref{thm:global}.

A sign condition on only one momentum does not
ensure global existence.
Choose a nonzero $n_0\in C_c^\infty$ with $n_0\ge0$,
so that $v_0=P*n_0>0$.
Take $u_{0,M}=h_M$ from \eqref{frequency-data},
with fixed $x_0$, $\delta>0$, and $a_j=j^{-3/4}$.
The initial convolution quantities are uniformly bounded,
whereas $-u_{0,M}'(x_0)\to\infty$.
Theorem~\ref{thm:blowup} gives $T_M^*\to0$,
although $n_M(t)\ge0$ throughout the lifespan
by the characteristic formula.
\end{remark}

\begin{remark}
For 3DP, the density terms cancel in the mixed-energy
identity, but the momentum signs need not be preserved.
Indeed, if smooth initial data satisfy
$m_0(x_0)=0$, $u_0(x_0)>0$, and $\rho_0(x_0)\ne0$, then
\[
(\partial_t+uv\partial_x)m(0,x_0)
=-\rho_0(x_0)^2u_0(x_0)<0.
\]
Thus the preceding global existence argument does not
extend to general 3DP data with nonzero density.
\end{remark}

\section{Examples}\label{sec:examples}
The following examples show that the blow-up result allows
slowly decaying density perturbations and that norm inflation
does not require proportional background velocities.

\begin{example}[Blow-up with a slowly decaying density]
Let $\phi(x)=xe^{-x^2}$ and, for $0<\epsilon\le1$, set
\[
u_0^\epsilon(x)=-\epsilon^{1/2}\phi(x/\epsilon),\qquad
v_0(x)=e^{-x^2},\qquad
n_0(x)=(3-4x^2)e^{-x^2}.
\]
Take $\bar\rho=1$ and $\rho_0=1+f_a$, where
$f_a(x)=(1+x^2)^{-a/2}$ with $1/2<a<1$.
Then
\[
f_a\in\bigcap_{k\ge0}H^k,\qquad
\rho_0^2-1=2f_a+f_a^2\notin L^1,
\]
since $f_a\in L^2\setminus L^1$ and its derivatives
decay faster.
Moreover,
\[
N_0^\epsilon
\le\|n_0\|_{L^\infty}
\|P*|u_{0,x}^\epsilon|\|_{L^\infty}
+\|2f_a+f_a^2\|_{L^\infty}
\le\frac32\epsilon^{1/2}\|\phi'\|_{L^1}+3.
\]
Also, $A_0^\epsilon=O(\epsilon^{1/2})$ and $B_0$
is independent of $\epsilon$, whereas
\[
v_0(0)=1,\qquad
-u_{0,x}^\epsilon(0)=\epsilon^{-1/2}\to\infty.
\]
Choose $k>0$, independent of $\epsilon$, such that
$1+(A_0^\epsilon)^2+B_0^2+N_0^\epsilon\le k^2$.
For any fixed $\theta\in(0,1)$,
$\epsilon^{-1/2}\ge\Phi_\theta(k,1)$
when $\epsilon$ is sufficiently small.
Theorem~\ref{thm:blowup} therefore gives finite-time blow-up.
The reciprocal-slope estimate in
Section~\ref{sec:inflation} further yields
\[
T_\epsilon^*\le\epsilon^{1/2}+O(\epsilon).
\]
\end{example}
\begin{example}[Norm inflation near a nonproportional background]
Take $\bar\rho=1$ and
\[
(\eta_*,u_*,v_*)
=(e^{-x^2},e^{-x^2},e^{-2x^2})
\in H^{1/2}\times(H^{3/2})^2.
\]
Using $\phi$ from the construction in
Section~\ref{sec:inflation}, apply the
construction with $x_0=0$, $a_j=j^{-3/4}$, and
$M_\delta=\lceil\delta^{-8}\rceil$.
The resulting initial data satisfy
\[
\|\rho_{0,\delta}-1-\eta_*\|_{H^{1/2}}
+\|u_{0,\delta}-u_*\|_{H^{3/2}}
+\|v_{0,\delta}-v_*\|_{H^{3/2}}
\le C\delta,
\]
with
\[
v_{0,\delta}(0)=1,\qquad
y_{0,\delta}
=\delta|\phi'(0)|\sum_{j=1}^{M_\delta}j^{-3/4}
\asymp\delta^{-1}.
\]
The initial convolution quantities are uniformly bounded.
Thus, for sufficiently small $\delta>0$,
the lifespan estimate in Section~\ref{sec:inflation} gives
\[
T_\delta^*
\le\frac1{y_{0,\delta}}+O(y_{0,\delta}^{-2})
=O(\delta).
\]
The one-component criterion then yields
$0<t_\delta<T_\delta^*$ such that
\[
\|u_\delta(t_\delta)-u_*\|_{H^{3/2}}>\delta^{-1}.
\]
Hence the initial data converge in
$H^{1/2}\times(H^{3/2})^2$, while norm inflation
occurs in $u$ at times tending to zero.
Taking $\rho_{0,\delta}=\bar\rho=0$ gives the
corresponding GX example near
$(u_*,v_*)=(e^{-x^2},e^{-2x^2})$.
\end{example}

\begingroup

\section*{Declarations}\addcontentsline{toc}{section}{Declarations}
\noindent\textbf{Funding.} Z. Yin was supported by the National Natural Science Foundation of China (No. 12571261).\\
\textbf{Author contributions.} S. Liu and Z. Yin contributed equally to this work.\\
\textbf{Data availability statement.} Data sharing is not applicable to this article, as no datasets were generated or {analyzed during the study.}\\
\textbf{Conflict of interest.} The authors declare that they have no conflict of interest.


\addcontentsline{toc}{section}{References}
\begingroup\normalsize
\begin{thebibliography}{42}
\setlength{\itemsep}{0pt}\setlength{\parsep}{0pt}
\bibitem{r1} H. Bahouri, J.-Y. Chemin, and R. Danchin. \emph{Fourier Analysis and Nonlinear Partial Differential Equations}, volume 343 of Grundlehren der Mathematischen Wissenschaften. Springer, Heidelberg, 2011.
\bibitem{r2} R. Chen, Z. Qiao, and S. Zhou. Persistence properties and wave-breaking criteria for the Geng--Xue system. \emph{Math. Methods Appl. Sci.}, 42(18):6999--7010, 2019.
\bibitem{r3} A. Constantin and D. Lannes. The hydrodynamical relevance of the Camassa--Holm and Degasperis--Procesi equations. \emph{Arch. Ration. Mech. Anal.}, 192(1):165--186, 2009.
\bibitem{r4} A. Degasperis, D. D. Holm, and A. N. W. Hone. A new integrable equation with peakon solutions. \emph{Teoret. Mat. Fiz.}, 133(2):170--183, 2002.
\bibitem{r5} A. Degasperis and M. Procesi. Asymptotic integrability. In \emph{Symmetry and Perturbation Theory}. World Sci. Publ., 1999.
\bibitem{r6} J. Dieudonn\'e. \emph{Foundations of Modern Analysis}. Read Books Ltd, 2011.
\bibitem{r7} X. Geng and B. Xue. An extension of integrable peakon equations with cubic nonlinearity. \emph{Nonlinearity}, 22(8):1847--1856, 2009.
\bibitem{r8} A. A. Himonas and C. Kenig. Non-uniform dependence on initial data for the CH equation on the line. \emph{Differential Integral Equations}, 22(3--4):201--224, 2009.
\bibitem{r9} A. A. Himonas, C. Kenig, and G. Misio\l ek. Non-uniform dependence for the periodic CH equation. \emph{Comm. Partial Differential Equations}, 35(6):1145--1162, 2010.
\bibitem{r10} A. A. Himonas and D. Mantzavinos. The initial value problem for a Novikov system. \emph{J. Math. Phys.}, 57(7):071503, 21 pp., 2016.
\bibitem{r11} T. Kato and G. Ponce. Commutator estimates and the Euler and Navier--Stokes equations. \emph{Comm. Pure Appl. Math.}, 41(7):891--907, 1988.
\bibitem{r12} H. Li. Degenerate form of a three-component Degasperis--Procesi equation. \emph{J. Math. Anal. Appl.}, 464(2):1082--1088, 2018.
\bibitem{r13} N. Li. A new 3-component Degasperis--Procesi hierarchy. \emph{Phys. D}, 449:Paper No. 133763, 2023.
\bibitem{r14} Z. Li and Y. Hu. Local well-posedness and blow-up criteria for a three-component Camassa--Holm type equation. \emph{J. Math. Phys.}, 61(2):021501, 2020.
\bibitem{r15} S. Liu and Z. Yin. Blow-up phemomenon for the Geng--Xue system and related models, 2026. arXiv:2601.21295.
\bibitem{r16} H. Lundmark and J. Szmigielski. An inverse spectral problem related to the Geng--Xue two-component peakon equation. \emph{Mem. Amer. Math. Soc.}, 244(1155):viii+87 pp., 2016.
\bibitem{r17} Z. Meng, Y. Nie, W. Ye, and Z. Yin. Global well-posedness, blow-up phenomenon and ill-posedness for the hyperbolic Keller--Segel equations. \emph{J. Differential Equations}, 413:828--850, 2024.
\bibitem{r18} V. Novikov. Generalizations of the Camassa--Holm equation. \emph{J. Phys. A}, 42(34):342002, 14 pp., 2009.
\bibitem{r19} M. Taylor. Commutator estimates. \emph{Proc. Amer. Math. Soc.}, 131(5):1501--1507, 2003.
\bibitem{r20} H. Inci, T. Kappeler, and P. Topalov. On the regularity of the composition of diffeomorphisms. arXiv:1107.0488v2, 2012.
\bibitem{r21} Z. Guo, X. Liu, L. Molinet, and Z. Yin. Ill-posedness of the Camassa--Holm and related equations in the critical space. \emph{J. Differential Equations}, 266:1698--1707, 2019. DOI: 10.1016/j.jde.2018.08.013.
\bibitem{r22} Z. Li, Y. Hu, and X. Wu. Local well-posedness and global existence for a multi-component Novikov equation. \emph{Applicable Analysis}, 100(16):3509--3529, 2021. DOI: 10.1080/00036811.2020.1721477. Preprint: arXiv:1911.11510 (2019).
\bibitem{r24} D.-A. Geba. Ill-posedness in the critical Sobolev space for the Fokas--Olver--Rosenau--Qiao equation. arXiv:2607.13259v1, 2026.
\bibitem{r27} R. Camassa and D. D. Holm. An integrable shallow water equation with peaked solitons. \emph{Phys. Rev. Lett.}, 71(11):1661--1664, 1993.
\bibitem{r28} R. Danchin. A few remarks on the Camassa--Holm equation. \emph{Differ. Integral Equ.}, 14(8):953--988, 2001.
\bibitem{r29} A. Bressan and A. Constantin. Global conservative solutions of the Camassa--Holm equation. \emph{Arch. Ration. Mech. Anal.}, 183(2):215--239, 2007.
\bibitem{r31} R. Camassa, D. D. Holm, and J. M. Hyman. A new integrable shallow water equation. \emph{Adv. Appl. Mech.}, 31:1--33, 1994.
\bibitem{r32} G. M. Coclite and K. H. Karlsen. On the well-posedness of the Degasperis--Procesi equation. \emph{J. Funct. Anal.}, 233(1):60--91, 2006.
\bibitem{r33} A. Constantin, R. I. Ivanov, and J. Lenells. Inverse scattering transform for the Degasperis--Procesi equation. \emph{Nonlinearity}, 23(10):2559--2575, 2010.
\bibitem{r34} J. Escher and Z. Yin. Initial boundary value problems for nonlinear dispersive wave equations. \emph{J. Funct. Anal.}, 256(2):479--508, 2009.
\bibitem{r35} A. A. Himonas and C. Holliman. The Cauchy problem for the Novikov equation. \emph{Nonlinearity}, 25(2):449--479, 2012.
\bibitem{r36} A. A. Himonas and J. Holmes. H\"older continuity of the solution map for the Novikov equation. \emph{J. Math. Phys.}, 54(6):061501, 11 pp., 2013.
\bibitem{r37} A. N. W. Hone and J. P. Wang. Integrable peakon equations with cubic nonlinearity. \emph{J. Phys. A}, 41(37):372002, 10 pp., 2008.
\bibitem{r38} Y. Hou, P. Zhao, E. Fan, and Z. Qiao. Algebro-geometric solutions for the Degasperis--Procesi hierarchy. \emph{SIAM J. Math. Anal.}, 45(3):1216--1266, 2013.
\bibitem{r39} J. Li, Y. Yu, and W. Zhu. Ill-posedness for the periodic Camassa--Holm type equations in critical Besov spaces. \emph{Ann. Mat. Pura Appl.} (4), 205(3):1271--1290, 2026.
\bibitem{r40} Z. Qiao. Integrable hierarchy, $3\times3$ constrained systems, and parametric solutions. \emph{Acta Appl. Math.}, 83(3):199--220, 2004.
\bibitem{r41} Z. Yin. On the Cauchy problem for an integrable equation with peakon solutions. \emph{Illinois J. Math.}, 47(3):649--666, 2003.
\bibitem{r42} Z. Yin. Global weak solutions for a new periodic integrable equation with peakon solutions. \emph{J. Funct. Anal.}, 212(1):182--194, 2004.
\bibitem{r43} {W. Ye, Z. Yin, and Y. Guo. The well-posedness for the Camassa--Holm type equations in critical Besov spaces $B^{1+1/p}_{p,1}$ with $1\le p<+\infty$. \emph{J. Differential Equations}, 367:729--748, 2023. DOI: 10.1016/j.jde.2023.05.032.}
\bibitem{r44} {R. M. Chen, T. Hu, and Y. Liu. The shallow-water models with cubic nonlinearity. \emph{J. Math. Fluid Mech.}, 24(2):Paper No. 49, 31 pp., 2022. DOI: 10.1007/s00021-022-00685-4.}
\bibitem{BerselliBisconti2015}
{L. C. Berselli and L. Bisconti.
On the existence of almost-periodic solutions for the
2D dissipative Euler equations.
\emph{Rev. Mat. Iberoam.}, 31(1):267--290, 2015.
DOI: 10.4171/rmi/833.}
\bibitem{GXPeakonDynamics} {H. Lundmark and J. Szmigielski. Dynamics of interlacing peakons (and shockpeakons) in the Geng--Xue equation. \emph{J. Integrable Syst.}, 2(1):xyw014, 2017. DOI: 10.1093/integr/xyw014.}
\end{thebibliography}
\endgroup
\end{document}